\documentclass[11pt]{article}
\usepackage{color}
\usepackage{epsfig}

\linespread{1.4} 

\newcommand{\Rp}{\R_{\geq 0}} %{[0,+\infty)}

\newcommand{\tdd}{\mathrm{RC}}
\newcommand{\tDD}{\mathrm{RD}}
\newcommand{\td}{\mathrm{rc}}
\newcommand{\tdD}{\mathrm{rd}}
\newcommand{\lbb}{\nu}
\newcommand{\diag}{\mathrm{diag}}
\newcommand{\Tr}{\mathrm{Tr}}
\newcommand{\G}{{\cal {G}}}
\newcommand{\K}{{\cal {K}}}
\newcommand{\T}[1]{{#1}^T}

\usepackage{%graphicx,color,
amsmath,amssymb}
\usepackage{%fancybox,%
           % epsf,%
            theorem,%
           % eepic,%
           % xcolor,%
           % epsfig,%
            amsfonts,%
            pifont}%
%\usepackage{refcheck}
% \usepackage%
% [
% nodraft%                  % remove all extras
% ,lettersize%               % compile for letter size paper
% ,nothm%
% ]
% {symbols}
%\usepackage{graphics}
%\usepackage{color}

%--------------------------
%\newcommand{\auth}{\textsc}
%\newcommand{\tit}{\textit}
%\newcommand{\jou}{\textrm}

\newcommand{\Tau}{{\cal T}}

\newcommand{\Id}{\mathrm{Id}}

\newcommand{\mR}{\mathbf R}
\newcommand{\mRp}{\mathbf R_{\geq 0}}

\renewcommand{\r}[1]{(\ref{#1})}
\newcommand{\R}{\mathbf{R}}
\newcommand{\N}{\mathbf{N}}
\newcommand{\Z}{\mathbf{Z}}

\newcommand{\C}{\mathbf{C}}
\newcommand{\beq}{\begin{equation}}
\newcommand{\eeq}{\end{equation}}
\newcommand{\be}{\begin{equation}}
\newcommand{\ee}{\end{equation}}
\newcommand{\bea}{\begin{eqnarray}}
\newcommand{\eea}{\end{eqnarray}}
\newcommand{\br}{\begin{eqnarray}}
\newcommand{\er}{\end{eqnarray}}
\newcommand{\brs}{\begin{eqnarray*}}
\newcommand{\ers}{\end{eqnarray*}}
\newcommand{\ba}{\begin{array}}
\newcommand{\ea}{\end{array}}

\newcommand{\caG}{{\cal {G}}}

\newcommand{\om}{\omega}
\newcommand{\Om}{\Omega}
\newcommand{\Ga}{\Gamma}
\newcommand{\lb}{\lambda}

\newcommand{\ga}{\gamma}
\newcommand{\tht}{\theta}

\newcommand{\al}{\alpha}

\newcommand{\lp}{\left(}
\newcommand{\rp}{\right)}

\newcommand{\bed}{\begin{description}}
\newcommand{\eed}{\end{description}}

\def\n{\noindent}
\def\rref#1{(\ref{#1})}
\def\eps{\varepsilon}

\def\al{\alpha}

\def\th{\theta}

\def\mt{\mapsto}
\def\EOP{\ \hfill \rule{0.5em}{0.5em} }

\newenvironment{proof}{\noindent {\em Proof. }}{\hfill $\blacksquare$ \vskip 3pt}

%-----------------------

\newtheorem{thm}{Theorem}[section]
\newtheorem{lemma}[thm]{Lemma}
\newtheorem{cor}[thm]{Corollary}
\newtheorem{prop}[thm]{Proposition}
\begin{theorembodyfont}{\rmfamily}
\newtheorem{rmk}[thm]{Remark}

\newtheorem{conjecture}{Open problem}
\end{theorembodyfont}

\newtheorem{deff}[thm]{Definition}

%\newtheorem{lemma}{Lemma}
%\newenvironment{proof}{{\noindent \bf Proof}}{\mbox{}\hfill $\blacksquare$}
%.....................................................

\textwidth 6.6in
\topmargin -0.8in
\textheight 9.2in
\oddsidemargin -0.25in
\evensidemargin -0.25in

\begin{document}
%{\fontencoding{OT1}\fontsize{9.68}{11.83pt}\selectfont

\title{\bf \Large \vspace{5mm}%
On the stabilization of persistently excited linear systems\thanks{The work 
%of the first author 
was in part carried out while the first author
was working as Marie Curie Fellow at the Department of Mathematics and Statistics,
University of Kuopio, Finland, supported by the European Commission 6th framework
program ``Transfer of Knowledge" through the project
``Parametrization in the Control of Dynamic Systems'' (PARAMCOSYS, MTKD-CT-2004-509223). The second author was partially supported by  
the grant ArHyCo of the {\it Agence Nationale de la Recherche}. 
}}

\author{Yacine Chitour\footnote{{\indent  Laboratoire des Signaux et
Syst\`emes, Sup\'elec,
 3, Rue Joliot Curie,  91192 Gif s/Yvette, France
  and Universit\'e Paris Sud, Orsay}, {\tt chitour@lss.supelec.fr}}
 \qquad
                Mario Sigalotti\footnote{{\indent
Institut
\'Elie Cartan, UMR 7502 INRIA/Nancy-Universit\'e/CNRS,
 BP 239, Vand\oe uvre-l\`es-Nancy 54506,  France
 and CORIDA, INRIA Nancy -- Grand Est, France},
 {\tt mario.sigalotti@inria.fr}}
}

\date{}
\maketitle

\begin{abstract}
We consider control systems of the type $\dot x = A x +\alpha(t)bu$,
where $u\in\R$, $(A,b)$ is a controllable pair and $\alpha$ is an
unknown time-varying signal with values in $[0,1]$ satisfying a
persistent excitation condition i.e., $\int_t^{t+T}\al(s)ds\geq \mu$ for
every $t\geq 0$, with $0<\mu\leq T$ independent on $t$. We prove
that such a system is stabilizable with a linear feedback depending
only on the pair $(T,\mu)$ if the eigenvalues of
$A$ have non-positive real part. We also show that stabilizability does not hold 
for arbitrary matrices $A$. 
Moreover, the question of whether the system can be stabilized or
not with an arbitrarily large rate of convergence gives rise to a
bifurcation phenomenon in dependence of the parameter $\mu/T$.
\end{abstract}

\section{Introduction}
The present paper is a continuation of \cite{MCSS}, where the study
of %general 
%$n$-dimensional 
control linear systems subject to scalar
persistently excited PE-signals was initiated. The general form of such 
%a linear
%time-dependent 
systems is given  by
\begin{equation}\label{system}
\dot x = A x +\alpha(t)Bu\,,
\end{equation}
where $x\in \R^n$, $u\in \R^m$ and the function $\alpha$ is a {\em scalar} PE-signal, i.e.,
$\alpha$ takes values in $[0,1]$ and
%(cf. \rref{1002})
there exist two positive constants $\mu,T$ such that, for every
$t\geq 0$,
\begin{equation}
\label{Tmu} \int_t^{t+T}\alpha(s)ds\geq \mu.
\end{equation}
Given two positive real numbers $\mu$ and $T$ with $\mu\leq T$, we use ${\cal
{G}}(T,\mu)$ to denote the class of all PE signals verifying
\rref{Tmu}.

In \rref{system}, the PE-signal $\alpha$ can be seen as an input
perturbation modeling the fact that the instants where the control
$u$ acts on the system are not exactly known. If $\alpha$ only takes
the values $0$ and $1$, then \rref{system} actually switches between
the uncontrolled system $\dot x=Ax$ and the controlled one $\dot x =
Ax+Bu$. In that context, the PE condition \rref{Tmu} is designed to
guarantee some action on the system. (For a more detailed discussion on
the interpretation of
persistently excited systems and on the related literature, see \cite{MCSS}.)

Our main concern will be the global asymptotic stabilization of system \rref{system}
with a constant linear feedback $u=-Kx$ where the gain matrix $K$ is
required to be the same \emph{for all} signals in the considered class
${\cal {G}}(T,\mu)$ i.e., $K$ depends only on $A,b,T,\mu$ and not on
a specific element of ${\cal {G}}(T,\mu)$. We refer to such a gain
matrix $K$ as a $(T,\mu)$-stabilizer. It is clear that $(A,B)$ must
be stabilizable for
%positive answer
hoping that a $(T,\mu)$-stabilizer exists
and we will suppose that
throughout the paper. Moreover, %by a standard linear change of variable,
the stabilizability analysis can be reduced to the controllability subspace and thus to the case
where $(A,B)$ is
controllable.

The questions studied in this paper find their origin in a problem
stemming from identification and adaptive control (cf.
\cite{ABJKKMPR}). Such a problem deals with the linear system $\dot
x = -P(t)u$, where the matrix $P(\cdot)$ is
symmetric non-negative definite and %actually
plays the role of $\alpha$. If $P\equiv I$, then $u^*=x$ trivially
stabilizes the system exponentially. But what if $P(t)$ is only
semi-positive definite for all $t$? Under which conditions on $P$
does $u^*=x$ still stabilize the system? The answer for this
particular case, can be found in the seminal paper \cite{MORNAR}
which asserts that, if $x\in\mR^n$  and $P\geq0$ is bounded and has
bounded derivative, it is {\em necessary and sufficient}, for the
global exponential stability of $\dot x=-P(t)x$, that $P$ is also
{\em persistently exciting}, % (PE) 
{\em i.e.}, that there exist
$\mu,T>0$ such that
\begin{equation}\label{1002}
\int_{t}^{t+T} \xi^T P(s)\xi ds \geq \mu,
\end{equation}
for all unitary vectors $\xi\in\mR^n$ and all $t\geq 0$. Therefore,
as regards the stabilization of \rref{system}, the notion of {\em
persistent excitation} seems to be a reasonable additional
assumption %to consider for
on
the signals $\alpha$.

Let us recall the main results of \cite{MCSS}. We first addressed
the issue of controllability of \rref{system}, uniformly with
respect to $\alpha\in {\cal {G}}(T,\mu)$. We proved that, if the
pair $(A,B)$ is controllable, then \rref{system} is (completely)
controllable in time $t$ if and only if $t>T-\mu$. We next focused
on the existence of $(T,\mu)$-stabilizers.
We first treated the case where $A$ is neutrally stable and we
showed that in this case the gain $K=B^T$ is a $(T,\mu)$-stabilizer
for system~\rref{system} (see also \cite{ABJKKMPR}). Note that in
the neutrally stable case $K$ does not %even 
depend on 
%the specific
%class ${\cal{G}}(T,\mu)$. 
$T$ and $\mu$. 
We next turned to the case where $A$ is
not stable. In such a situation, even in the one-dimensional case, a
stabilizer $K$ cannot be chosen independently of $T$ and $\mu$. In
\cite{MCSS}, we considered the first nontrivial unstable case,
namely the double integrator $\dot x=J_2x+\al b_0u$, where $J_2$ denotes the
$2\times2$ Jordan block, the control is scalar and $b_0=(0,1)^T$. We showed that, for every
pair $(T,\mu)$, there exists a $(T,\mu)$-stabilizer for
$\dot x=J_2x+\al b_0u$, $\al\in \G(T,\mu)$.

In this paper, we restrict ourselves to the single-input case 
\begin{equation}\label{system-si}
\dot x = A x +\alpha(t)b u,~~~~~~~u\in\R,~~~~~~~~~\al\in{\cal G}(T,\mu),
\end{equation}
and 
we provide two sets of results. The first one
concerns the stabilizability of \rref{system-si}. % with scalar-valued control.
Given two arbitrary constants $\mu$ and $T$ with $0<\mu\leq T$, we prove
the existence of a $(T,\mu)$-stabilizer for \r{system-si} when the eigenvalues of $A$
have non-positive real part. The second set of results concerns the
possibility of obtaining
an arbitrary rate of convergence once stabilization is
achieved. We essentially focus on the two-dimensional case and we
point out an interesting phenomenon: there exists $\rho_*\in (0,1)$
so that, for every controllable two-dimensional pair $(A,b)$,
every $T>0$ and every $\mu\in (0,\rho_* T)$, the maximal rate of
convergence of \r{system-si}
%$\dot x =Ax+\al bu$, $\al\in\G(T,\rho T)$, 
is finite.
Here maximality is evaluated with respect to all possible
%$(T,\rho T)$-stabilizers. 
$(T,\mu)$-stabilizers. 
As a consequence, we prove the existence of
matrices $A$ (e.g., $J_2+\lambda \Id_2$ with $\lambda$ large enough)
such that for every $T>0$ 
and every $\mu\in (0,\rho_* T)$, the PE
system \r{system-si}
%$\dot x =Ax+\al bu$, $\al\in\G(T,\rho T)$, 
does not admit
%$(T,\rho T)$-stabilizers. 
$(T,\mu)$-stabilizers. 
The latter result is
rather surprising when one compares it with the following two facts:
let $\rho\in (0,1]$; $(i)$  given a sequence $(\al_{n})_{n\in\N}$ with $\al_{n}\in
\caG(T_n,\rho T_n)$ and $\lim_{n\rightarrow +\infty}T_n=0$, all its
weak-$\star$ limit points $\al_\star$ take values in
$[\rho,1]$ (see Lemma~\ref{propri1}) and $(ii)$ the two-dimensional switched system
$\dot x=J_2x+\al_\star b_0u$ can be stabilized, uniformly with
respect to $\al_\star\in L^{\infty}(\Rp,[\rho,1])$, with an
arbitrary rate of convergence. The weak-$\star$ convergence
considered in $(i)$ is the natural one in this context since it
renders  the input-output mapping continuous. 

Let us briefly
comment on the technics
used in this paper. 
First of all, it is clear that the notion of
{\it common Lyapunov function}, rather powerful in the realm of
switched systems, cannot be of (direct) help here since, at the
differential level, one can evolve with an unstable dynamics $\dot
x= Ax$, when $\alpha=0$ takes the value zero. 
More refined tools as multiple 
and non-monotone Lyapunov functions (see, e.g., \cite{aeyels-peutemanTAC,aeyels-peutemanAUTOMATICA,branicky,colaneri,peutemanaeyels,shorten-review}) do not seem well-adapted to persistently excited systems, at least for what concerns the proof of their stability. 
It seems to us that one must rather perform
a trajectory analysis, on a time interval of length at least 
equal to $T$, in order to achieve any information which is
uniform with respect to $\alpha\in\G(T,\mu)$. This viewpoint is more similar to the geometric approach to switched systems behind the results in \cite{boscain-balde,boscainSIAM,selfC}.  
As a  second consideration, notice that point $(i)$
described above, which is systematically used in the paper,
presents formal similarities with the technique of {\it averaging} but
is rather different from it, since %$T$ is not the period of a PE signal.
no periodicity nor constant-average assumption is made here.  
Moreover, for a given persistently excited system, $T$ is fixed and thus it does not tend to zero.

The paper is organized as follows. In Section~\ref{def00} we
introduce the notations of the paper, the basic definitions and some
useful technical lemmas. We gather in Section~\ref{s_n-integrator}
the stabilizability results for matrices whose spectrum has
non-positive real part. Finally, the analysis of the maximal rates
of convergence and divergence is the object of
Section~\ref{s_maximal-rate}. Since many of our results give rise to
further challenging questions, we propose in Section~\ref{OP}
several conjectures and open problems.

\section{Notations and definitions}\label{def00}
Let $\N$ denote the set of positive integers. Given $n$ and $m$
belonging to $\N$, we use $0_{n\times m}$ to denote the $n\times m$
matrix made of zeroes, $M_n(\R)$ the set of real-valued $n\times n$
matrices, and $\Id_n$ the $n\times n$ identity matrix. We also write
$0_n$ for $0_{n\times 1}$, $\sigma(A)$ for the spectrum of a matrix
$A\in M_n(\R)$, and $\Re(\lb)$ (respectively, $\Im(\lb)$) for the
real (respectively, imaginary) part of a complex number $\lambda$.

\begin{deff}[PE signal and $(T,\mu)$-signal]\label{Tm-signal}
Let $\mu$ and $T$ be positive constants with $\mu\leq T$. A \emph{$(T,\mu)$-signal} is
a measurable function $\alpha:\Rp\to[0,1]$
satisfying
\begin{equation}\label{EP}
\int_t^{t+T}\alpha(s)ds \geq \mu\,,\quad \forall t\in\mRp\,.
\end{equation}
We use ${\cal {G}}(T,\mu)$ to denote the set of all
$(T,\mu)$-signals. A \emph{PE signal} is a measurable function
$\alpha:\Rp\to[0,1]$ such that there exist $T,\mu$
%positive real numbers 
for which $\alpha$ is a $(T,\mu)$-signal.
\end{deff}

\begin{deff}[PE system]\label{Tm-sys}
Given two positive constants $\mu$ and $T$ with $\mu\leq T$ and
a controllable pair $(A,b)\in M_n(\R)\times \R^n$, we define the
 \emph{PE system} associated to $T,\mu,A$, and $b$ as the family of linear control
systems given by \be\label{system1} \dot x=Ax+\alpha u b, \ \ \
\alpha\in {\cal {G}}(T,\mu). \ee
\end{deff}
Given a PE system \rref{system1}, we address the following problem.
We want to stabilize \rref{system1} \emph{uniformly} with respect to
every $(T,\mu)$-signal $\alpha$, i.e., we want to find a vector
$K\in\R^n$ which makes the origin of
\begin{equation}\label{feedback}
\dot x=(A-\alpha(t) b \T{K}) x
\end{equation}
globally asymptotically stable, with $K$ depending only on $A$, $b$,
$T$ and $\mu$.

More precisely, referring to
$%$
x(\cdot\,;t_0,x_0,K,\alpha)
%=(x_1(\cdot\,;t_0,x_0,K,\alpha),\dots,x_n(\cdot\,;t_0,x_0,K,\alpha))^T
$ %$
as the solution of \rref{feedback} with initial condition
$x(t_0;t_0,x_0,K,\alpha)=x_0$, we introduce the following
definition.

\begin{deff}[$(T,\mu)$-stabilizer]\label{stab}
Let $\mu$ and $T$ be positive constants with $\mu\leq T$. The gain $K$ is said to be a
\emph{$(T,\mu)$-stabilizer} for \rref{system1} if \rref{feedback}
 is globally asymptotically stable, uniformly with every $(T,\mu)$-signal
 $\alpha$. Since \rref{feedback} is linear in $x$, this is equivalent to
 say that \rref{feedback}
is exponentially stable, uniformly with respect to $\alpha\in
\mathcal G(T,\mu)$, i.e., there exist $C,\gamma>0$ such that every
solution $x(\cdot\,;t_0,x_0,K,\alpha)$ of \r{feedback} satisfies
$$\|x(t;t_0,x_0,K,\alpha)\|\leq C e^{-(t-t_0)\gamma}\|x_0\|,\ \ \
\forall t\geq t_0.$$
\end{deff}

The next two lemmas collect some properties of PE
signals.

\begin{lemma}\label{proprieta}
\begin{enumerate}
\item \label{shift-i}If $\alpha(\cdot)$ is a $(T,\mu)$-signal, then,
for every $t_0\geq
0$, the same is true for $\alpha(t_0+\cdot)$. % is a $(T,\mu)$-signal. % as well.
\item \label{rho202} If $0<\rho'<\rho$ and $T>0$ then
$\G(T,\rho T)\subset \G(T,\rho' T)$.
\item \label{ligio}
For $\eta\in (0,\mu)$, $\G(T,\mu)\subset \G(T+\eta, \mu)\cap
\G(T-\eta, \mu-\eta)$.
\item \label{rho2} If $T\geq \tau>0$ and $\rho>0$, then
$\G(\tau,\rho \tau)\subset \G(T,(\rho/2) T)$.
\item \label{rhon} For every $0<\rho'<\rho$
there exists $M>0$
such that for every $T\geq M\tau>0$ one has 
$\G(\tau,\rho\tau)\subset \G(T,\rho'T)$.
\end{enumerate}
\end{lemma}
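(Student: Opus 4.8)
The plan is to verify all five statements directly from Definition~\ref{Tm-signal}, using nothing more than the two elementary bounds $0\le\alpha\le 1$ and additivity of the integral; no trajectory analysis is needed here.

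For item~\ref{shift-i} I would simply change variables: for every $t\ge 0$, $\int_t^{t+T}\alpha(t_0+s)\,ds=\int_{t+t_0}^{t+t_0+T}\alpha(s)\,ds\ge\mu$ because $t+t_0\ge 0$. Item~\ref{rho202} is immediate since $\rho T\ge\rho' T$. For item~\ref{ligio}: enlarging the window from $T$ to $T+\eta$ can only increase $\int\alpha$ since $\alpha\ge 0$, which gives $\G(T,\mu)\subset\G(T+\eta,\mu)$ (and $\mu\le T\le T+\eta$); for the shrunk window I would write $\int_t^{t+T-\eta}\alpha=\int_t^{t+T}\alpha-\int_{t+T-\eta}^{t+T}\alpha\ge\mu-\eta$, using $\int_{t+T-\eta}^{t+T}\alpha\le\eta$ because $\alpha\le 1$; the required inequalities $0<\mu-\eta\le T-\eta$ hold by the hypothesis $\eta\in(0,\mu)$ and $\mu\le T$.

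Items~\ref{rho2} and~\ref{rhon} are the ones needing a genuine, if small, argument, namely covering the long window $[t,t+T]$ by consecutive windows of length $\tau$. Writing $T=k\tau+r$ with $k=\lfloor T/\tau\rfloor\ge 1$ and $0\le r<\tau$, one gets $\int_t^{t+T}\alpha\ge\sum_{j=0}^{k-1}\int_{t+j\tau}^{t+(j+1)\tau}\alpha\ge k\rho\tau$. For item~\ref{rho2}, since $T=k\tau+r<(k+1)\tau\le 2k\tau$, this yields $k\rho\tau>(\rho/2)T$, as desired. For item~\ref{rhon}, I would bound $\rho'T=\rho'(k\tau+r)<\rho'\tau(k+1)$, so that $k\rho\tau\ge\rho'T$ holds as soon as $k(\rho-\rho')\ge\rho'$, i.e. $k\ge\rho'/(\rho-\rho')$; choosing $M\ge\rho/(\rho-\rho')$ guarantees $k=\lfloor T/\tau\rfloor\ge\lfloor M\rfloor> M-1\ge\rho'/(\rho-\rho')$ whenever $T\ge M\tau$, which gives the claim.

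The only (mild) obstacle is keeping the floor-function bookkeeping in items~\ref{rho2} and~\ref{rhon} tidy enough that the constants $1/2$ and $M$ come out exactly as stated; everything else reduces to a one-line monotonicity or change-of-variables observation.
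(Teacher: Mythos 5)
Your proof is correct and for the only nontrivial items (\ref{rho2} and \ref{rhon}) it uses exactly the paper's argument: cover $[t,t+T]$ by $\lfloor T/\tau\rfloor$ consecutive windows of length $\tau$ and bound $\lfloor T/\tau\rfloor\tau$ from below, with only cosmetic differences in how the floor is estimated (the paper uses $\lfloor T/\tau\rfloor\ge\max(1,T/\tau-1)$, you use $T<(\lfloor T/\tau\rfloor+1)\tau\le 2\lfloor T/\tau\rfloor\tau$). The first three items, which the paper omits as immediate, are handled correctly by your change-of-variables and monotonicity observations.
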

\begin{proof}
%%%%%%%%%%%%%%%%%%%%%%%%%%%
We only provide an argument for points \ref{rho2} and \ref{rhon}.
Fix $t\geq 0$, $T\geq \tau$, $\rho>0$ and $\al\in\G(\tau,\rho\tau)$. Let
$l$ be the integer part of $T/\tau$.
Since $l\geq \max(1,T/\tau-1)$,  then $\int_t^{t+T}\al(s)ds\geq l\rho \tau\geq
\max(\tau,T-\tau)\rho\geq T\rho/2$.
For $\rho'\in (0,\rho)$ and $T/\tau$ large enough, one has $\max(\tau,T-\tau)\geq (\rho'/\rho)T$
and so $\int_t^{t+T}\al(s)ds\geq  \rho' T$.
\end{proof}

 Let
 $$b_0=\lp \ba{c}0\\ 1\ea\rp,\ \ \ \ A_0=\lp\ba{cc}0&1\\
-1&0\ea\rp.
$$

Recall that an element $f$ of $L^\infty(\Rp,[0,1])$ is the
weak-$\star$ limit of a sequence $(f_k)_{k\in\N}$ of elements of
$L^\infty(\Rp,[0,1])$ if, for every $g\in L^1(\Rp,\R)$,
\begin{equation}\label{weakstar}
\int_0^\infty f(s)g(s)ds=\lim_{k\rightarrow\infty}\int_{0}^\infty f_k(s)g(s)ds.
\end{equation}
It is well known that $L^\infty(\Rp,[0,1])$ endowed with the
weak-$\star$ topology is compact (see, for instance,
\cite{BRE}). Hence, each $\G(T,\mu)$ is weak-$\star$ compact.
% for every $0<\mu\leq T$. 
Unless specified,
limit points of sequences of PE signals are to be understood
as limits of subsequences
with respect to the weak-$\star$ topology of
$L^\infty(\Rp,[0,1])$.

\begin{lemma}\label{propri1}
Let $(\alpha^{(n)})_{n\in\N}$ and
$(\nu_n)_{n\in\N}$ be, respectively,  a sequence of $(T,\mu)$-signals
and an increasing sequence of positive real
numbers such that $\lim_{n\rightarrow \infty}\nu_n=\infty$.
\begin{enumerate}
\item \label{P1-1}
Define $\alpha_n$ as the $(T/\nu_n,\mu/\nu_n)$-signal given by
$\alpha_n(t)=\alpha^{(n)}(\nu_n t)$ for $t\geq 0$. If $\alpha_\star$
is a limit point of the sequence $(\alpha_n)_{n\in\N}$, then
$\alpha_\star$ takes values in $[\mu/T,1]$ almost everywhere.
\item \label{P1-2} \label{per}
Let $j_0\in \{0,1\}$ and $h\in\N$. Let $\om_j$, $j=j_0,\dots,h$, be
real numbers with $\om_j=0$ if and only if $j=0$ and $\{\pm
\om_j\}\ne \{\pm \om_l\}$ for $j\ne l$. For every $t\geq 0$, let
$$
v(t)= \lp\ba{c}1\\ e^{\om_1 A_0 t}b_0\\ \vdots \\ e^{\om_h A_0
t}b_0\ea\rp\ \mbox{if }j_0=0\ \ \mbox{ or }\ \ v(t)=
\lp\ba{c}e^{\om_1 A_0 t}b_0\\ \vdots \\ e^{\om_h A_0 t}b_0\ea\rp\
\mbox{if }j_0=1.
$$

For every signal $\al$ and every $t\geq 0$, define
\begin{equation}\label{peC}
\alpha^C(t)=\alpha(t)v(t)v(t)^T.
\end{equation}
Then $\alpha^C$ is a time-dependent non-negative symmetric $(2h+1-j_0)\times (2h+1-j_0)$ matrix with
$\alpha^C\leq \Id_{2h+1-j_0}$ and there exists $\xi>0$ only depending on
$T,\mu$ and $\omega_{j_0},\dots,\om_h$ such that, for every $t\geq 0$, %we have
\begin{equation}\label{mPE}
\int_t^{t+T}\alpha^C(\tau)d\tau\geq \xi\, \Id_{2h+1-j_0}.
\end{equation}

%Therefore, $\alpha^C$ may be considered as a matrix-valued PE
%signal. 
Define, moreover,
$\alpha^C_n(t)=(\alpha^{(n)})^C(\nu_n t)$ for every $t\geq 0$ and every $n\in \N$. If $\alpha^C_\star$ is
a limit point  of the sequence $(\alpha^C_n)_{n\in\N}$ for the weak-$\star$ topology of
$L^\infty(\Rp,M_{2h+1-j_0}(\R))$,
then $\alpha_\star^C\geq (\xi/ T) \Id_{2h+1-j_0}$ almost everywhere.
\end{enumerate}
\end{lemma}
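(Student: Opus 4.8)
The statement has two separate parts for each of the two points of the lemma. I would treat Part~\ref{P1-1} first, since it is the prototype, and then adapt the argument to Part~\ref{per}.

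For Part~\ref{P1-1}, the plan is to start from the defining integral inequality for $\alpha^{(n)}\in\G(T,\mu)$ and rescale time. Fix $t\ge 0$. For every $t\ge0$ one has $\int_t^{t+T}\alpha^{(n)}(s)\,ds\ge\mu$; substituting $s=\nu_n\sigma$ and using $\alpha_n(\sigma)=\alpha^{(n)}(\nu_n\sigma)$ gives $\int_\tau^{\tau+T/\nu_n}\alpha_n(\sigma)\,d\sigma\ge\mu/\nu_n$ for all $\tau\ge0$, i.e.\ $\alpha_n\in\G(T/\nu_n,\mu/\nu_n)$. Now I want to pass this to the weak-$\star$ limit $\alpha_\star$. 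The clean way: for any fixed interval $[a,b]$, cover it by roughly $\lceil (b-a)\nu_n/T\rceil$ consecutive windows of length $T/\nu_n$ and add up the PE estimates, obtaining $\int_a^b\alpha_n\ge \frac{\mu}{T}(b-a)-\frac{\mu}{\nu_n}$ (boundary losses of at most one window). Since $\mathbf 1_{[a,b]}\in L^1(\Rp,\R)$, weak-$\star$ convergence gives $\int_a^b\alpha_\star=\lim_n\int_a^b\alpha_n\ge \frac{\mu}{T}(b-a)$. As $[a,b]$ is arbitrary and $\alpha_\star\le 1$ a.e.\ (the set $[0,1]$ is weak-$\star$ closed, or just note $\alpha_n\le1$), the Lebesgue differentiation theorem forces $\mu/T\le\alpha_\star(t)\le1$ for a.e.\ $t$.

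For Part~\ref{per}, I would split it into the deterministic matrix estimate \rref{mPE} and the limit statement. For \rref{mPE}: $\alpha^C(t)=\alpha(t)v(t)v(t)^T$ is clearly symmetric, non-negative, and bounded by $\Id$ because $|v(t)|^2=2h+1-j_0$ is constant (each block $e^{\om_j A_0 t}b_0$ is a unit vector and the constant entry, when $j_0=0$, contributes $1$) — actually $v(t)v(t)^T\le |v(t)|^2\Id$, so one should normalize or absorb the constant $|v(t)|^2$ into $\xi$; I will keep track of this harmless factor. The heart of \rref{mPE} is a lower bound $\int_t^{t+T}\alpha(\tau)v(\tau)v(\tau)^Td\tau\ge\xi\,\Id$ uniform in $\al\in\G(T,\mu)$: for a fixed unit $\zeta$, $\zeta^T\alpha^C(\tau)\zeta=\alpha(\tau)\,\langle v(\tau),\zeta\rangle^2$, and $\tau\mapsto\langle v(\tau),\zeta\rangle^2$ is a nonzero trigonometric polynomial in $\tau$ with frequencies among $\{0,\pm2\om_j,\pm(\om_j\pm\om_l)\}$, hence (by the distinctness hypothesis on the $\om_j$'s and the fact that $v(\tau)\ne0$) it cannot vanish on a set of full measure in any interval of length $T$; a compactness argument over $\zeta$ on the unit sphere and over $t\ge0$ (using that everything is periodic or almost-periodic, and that $\alpha$ spends total time $\ge\mu$ being "large enough" on $[t,t+T]$) yields a uniform $\xi>0$. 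Concretely I would argue: there is $\delta>0$ and, for each $\zeta$, a measurable set $E_\zeta\subset[t,t+T]$ of measure $\ge c$ on which $\langle v(\tau),\zeta\rangle^2\ge\delta$; since $\int_t^{t+T}\alpha\ge\mu$ and $\alpha\le1$, $\alpha$ is $\ge$ some threshold on a set of controlled measure, and intersecting with $E_\zeta$ gives the bound. Finally, the limit statement: set $\beta_n(\sigma):=\alpha^C_n(\sigma)=(\alpha^{(n)})^C(\nu_n\sigma)$; rescaling \rref{mPE} exactly as in Part~\ref{P1-1} gives $\int_\tau^{\tau+T/\nu_n}\beta_n\ge(\xi/\nu_n)\Id$, then the same covering-and-weak-$\star$ argument (now applied entrywise, testing against $g\,\zeta\zeta^T$ for $g\in L^1$ scalar and $\zeta$ fixed, or componentwise) yields $\int_a^b\beta_\star\ge\frac{\xi}{T}(b-a)\Id$ for all $[a,b]$, hence $\alpha^C_\star=\beta_\star\ge(\xi/T)\Id$ a.e.

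**Main obstacle.** The routine part is the time-rescaling plus covering argument, which is identical in all three places. The genuine work is the uniform (in $\al\in\G(T,\mu)$, in $t\ge0$, and in the direction $\zeta$) lower bound \rref{mPE}: one must quantify how the persistent-excitation mass $\mu$ of $\alpha$ interacts with the oscillatory weight $\langle v(\tau),\zeta\rangle^2$ so that the two cannot be "disjointly supported" enough to make the integral small, and do so with a constant depending only on $T,\mu,\om_{j_0},\dots,\om_h$. I expect to handle this by a contradiction/compactness argument: if no such $\xi$ existed, take sequences $\zeta_k\to\zeta$, $t_k$, $\al_k\in\G(T,\mu)$ with $\int_{t_k}^{t_k+T}\alpha_k\langle v(\tau),\zeta_k\rangle^2d\tau\to0$; after shifting to $t_k=0$ (Lemma~\ref{proprieta}.\ref{shift-i}) and extracting a weak-$\star$ limit $\al_\infty$ which, by Part~\ref{P1-1}'s type of reasoning (here with $\nu_n\equiv1$, just weak-$\star$ compactness of $\G(T,\mu)$), still satisfies the PE bound, the limit integral $\int_0^T\alpha_\infty(\tau)\langle v(\tau),\zeta\rangle^2d\tau=0$ forces $\langle v(\tau),\zeta\rangle^2=0$ for a.e.\ $\tau$ in a set of positive measure where $\alpha_\infty>0$ — contradicting that $\tau\mapsto\langle v(\tau),\zeta\rangle^2$ is a nonzero real-analytic (trigonometric) function. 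The distinctness assumption $\{\pm\om_j\}\ne\{\pm\om_l\}$ for $j\ne l$ is exactly what guarantees $v(\tau)\ne 0$ for all $\tau$ and that $\langle v(\tau),\zeta\rangle$ is not identically zero for any $\zeta\ne0$ (the components of $v$ are linearly independent functions), which is the crux of why $\xi>0$ can be extracted.
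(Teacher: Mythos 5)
Your proposal is correct and follows essentially the same route as the paper: the covering/rescaling argument plus weak-$\star$ testing against characteristic functions and Lebesgue points for point~\ref{P1-1} and for the limit statement in point~\ref{per}, and a weak-$\star$ compactness/contradiction argument reducing the uniform bound \rref{mPE} to the fact that $\zeta^T v(\cdot)$ is a nonzero real-analytic function for $\zeta\ne 0$. The only cosmetic differences are that the paper establishes positive definiteness for each fixed $\al$ first and then invokes compactness of $\G(T,\mu)$ (handling the $t$-dependence via shift-invariance and the orthogonality of $t\mapsto\diag(1,e^{\om_1A_0t},\dots)$), and that it phrases the nonvanishing of $\zeta^Tv(\cdot)$ as controllability of the pair $(A_0^C,v(0))$ rather than as linear independence of the trigonometric components; your observation that $\al^C\le\Id$ only holds after absorbing the constant $\|v\|^2=2h+1-j_0$ is a fair (and harmless) catch.
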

%\begin{proof}
{\it Proof.}
Let us first prove point~\ref{P1-1}. Let $\alpha_\star$ be the
weak-$\star$ limit of some sequence $(\alpha_{n_k})_{k\geq 1}$.
For every interval $J$ contained in $\mRp$ of finite length $|J|>0$, apply
\r{weakstar} by taking as $g$  the characteristic function of $J$.
Since each
$\alpha_{n_k}$ is a $(T/{\nu_{n_k}},\mu/{\nu_{n_k}})$-signal, it
follows that
\[
\frac 1{|J|} \int_J \alpha_\star(s)ds=\lim_{k\to\infty} \frac 1{|J|}
\int_J \alpha_{n_k}(s)ds \geq \liminf_{k\to\infty}
\frac{\mu}{|J|\nu_{n_k}} {\cal
I}\left(\frac{|J|\nu_{n_k}}{T}\right)=\frac{\mu}{T}\,,
\]
where ${\cal I}(\cdot)$ denotes the integer part.
Since $\al_\star$
is measurable and bounded, 
% (actually, $L^1_{\mathrm{loc}}$ would be enough), 
almost
every $t>0$ is a Lebesgue point for $\al_\star$, {\em i.e.}, the
limit
\[\lim_{\eps\to0+}\frac 1 {2\eps} \int_{t-\eps}^{t+\eps} \alpha_\star(s)ds\]
exists and is equal to $\al_\star(t)$ (see, for instance,
\cite{rudin}). We conclude that, as claimed,
$\alpha_\star\geq \mu/T$ almost everywhere.

For the first part of point~\ref{P1-2}
fix
$t\geq 0$ and notice that the map
$$\al\mapsto\int_t^{t+T}\alpha^C(s)ds$$
is continuous with
respect to the weak-$\star$ topology and takes values in the set of
non-negative symmetric matrices.

We claim that all such matrices are positive definite. 
%If we prove that all such matrices are positive definite
%then, by weak-$\star$ compactness of $\G(T,\mu)$, we deduce the
%existence of $\xi>0$ independent of $\al$ such that \r{mPE} holds true.
%(The independence of $\xi$ with respect to $t$ follows from the
%shift-invariance of $\G(T,\mu)$ pointed out in
%Lemma~\ref{proprieta}, point~{\ref{shift-i}}.)
Assume  by
contradiction that there exist $\alpha\in \mathcal G(T,\mu)$ and
$x_0\in\R^{2h+1-j_0}\setminus\{0_{2h+1-j_0}\}$ such that $\int_t^{t+T}x_0^T\alpha^C(s)x_0ds=0$.
Then, for almost every $s\in [t,t+T]$, we would have
$\alpha(s)x_0^T v(s)=0$. Since $\alpha(s)\neq 0$ for $s$ in a
set of positive measure, we deduce that the real-analytic function
$x_0^T v(\cdot)$ 
%takes the value zero on a set of positive
%measure, i.e. it 
is identically equal to zero.
Let $A_0^C=\diag(1,\om_1 A_0,\dots,\om_h A_0)$ if $j_0=0$ or
$A_0^C=\diag(\om_1 A_0,\dots,\om_h A_0)$ if $j_0=1$.
Then $x_0^T (A_0^C)^jv(0)=0$ for every non-negative integer $j$.
The contradiction is reached, since $(A_0^C,v(0))$ is a controllable pair
and $x_0\ne 0_{2h+1-j_0}$.

Then, by weak-$\star$ compactness of $\G(T,\mu)$, we deduce the
existence of $\xi>0$ independent of $\al$ such that \r{mPE} holds true.
The independence of $\xi$ with respect to $t$ follows from the
shift-invariance of $\G(T,\mu)$ pointed out in
Lemma~\ref{proprieta}. %, point~{\ref{shift-i}}.

%As for 
The second part of point~\ref{P1-2} follows from  the same argument used to prove point~\ref{P1-1}, noticing that, for
every $t\geq 0$,  
$$
\int_t^{t+\frac T{\nu_n}}\alpha^C_n(\tau)d\tau\geq \frac{\xi}{\nu_n}
\Id_{2h+1-j_0}.\eqno{\EOP}
$$
%\end{proof}

\section{Spectra with non-positive real part}\label{s_n-integrator}

We consider below the problem of whether a controllable pair
$(A,b)$ gives rise to a PE system that can be $(T,\mu)$-stabilized for
every choice of $\mu$ and $T$. We will see in Section~\ref{s_maximal-rate}
that this cannot
be done
in general. 
%be done if the real part of the
%eigenvalues of $A$ is too large. 
The scope of this section is
to study the case in which each eigenvalue of $A$ has non-positive real part.

The first step is to consider the special case of the $n$-integrator.
Let $J_n\in M_n(\R)$ be defined as
$$
J_n=
\lp\ba{cccccc}
0&1&0&\cdots&\cdots&0\\
0&0&1&0&\cdots&0\\
&&&&&\\
\vdots&&&\hspace{-7mm}\ddots&\hspace{-7mm}\ddots&\vdots\\
&&&&&\\
0&&\cdots&&0&1\\
0&&\cdots&\cdots&&0
\ea
\rp.
$$

\begin{thm}\label{theo}
Let $A=J_n$ and $b=(0,\dots,0,1)^T\in \R^n$.
Then, for every $T,\mu$ with $T\geq\mu>0$
there exists a $(T,\mu)$-stabilizer for \r{system1}.
\end{thm}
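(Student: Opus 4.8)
My plan rests on three ingredients: a dilation symmetry of the $n$-integrator, the stabilizability of the ``switched'' companion system whose input gain ranges in a positive interval, and the weak-$\star$ limit procedure of Lemma~\ref{propri1}.

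\emph{Scaling reduction.} For $A=J_n$ and $b=(0,\dots,0,1)^T$ there is a dilation symmetry: setting $D_\lambda=\diag(1,\lambda,\dots,\lambda^{n-1})$, a curve $x(\cdot)$ solves $\dot x=(J_n-\alpha(s)bK^T)x$ if and only if $y(t):=D_\lambda x(\lambda t)$ solves $\dot y=(J_n-\alpha(\lambda t)b\hat K^T)y$ with $\hat k_i=\lambda^{n+1-i}k_i$, while $\alpha\in\G(T,\mu)$ iff $\alpha(\lambda\,\cdot)\in\G(T/\lambda,\mu/\lambda)$. Hence the existence of a $(T,\mu)$-stabilizer depends only on $\rho:=\mu/T$, and it suffices to produce, for each $\rho\in(0,1]$, \emph{one} value $T'>0$ together with a $(T',\rho T')$-stabilizer; the prescribed $(T,\mu)$ is then recovered by taking $\lambda=T/T'$.

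\emph{A stabilizer for the interval $[\rho,1]$.} Fix $\rho\in(0,1]$. I would first build a gain $K$ making $\dot x=(J_n-\alpha(t)bK^T)x$ uniformly exponentially stable over \emph{all} measurable $\alpha:\Rp\to[\rho,1]$, by a backstepping construction. Writing $z=(x_1,\dots,x_{n-1})^T$ one has $\dot z=J_{n-1}z+(0,\dots,0,1)^Tx_n$ \emph{with no factor $\alpha$}; choose $\hat K\in\R^{n-1}$ with $A_z:=J_{n-1}-(0,\dots,0,1)^T\hat K^T$ Hurwitz, put $\zeta=x_n+\hat K^Tz$ and $u=-\ell\zeta$, and compute $\dot z=A_zz+(0,\dots,0,1)^T\zeta$ and $\dot\zeta=(-\alpha\ell+\hat k_{n-1})\zeta+p^Tz$, with $p$ depending only on $\hat K$. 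Since $\alpha\ge\rho$, the scalar coefficient is $\le-\delta$ with $\delta=\rho\ell-\hat k_{n-1}$, which can be made as large as desired by enlarging $\ell$; then for a suitable constant $c>0$ the quadratic form $V=z^TP_zz+c\zeta^2$ (with $A_z^TP_z+P_zA_z=-\Id_{n-1}$) satisfies $\dot V\le-\varepsilon V$ along the closed loop, uniformly in $\alpha\in[\rho,1]$. Denote by $\gamma_0>0$ and $C_0$ the resulting uniform decay rate and transient constant.

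\emph{From $[\rho,1]$ to fast persistent excitation, and conclusion.} Pick $\tau>0$ with $C_0e^{-\gamma_0\tau}<1/2$. I claim there is $T'>0$ such that $\|x(\tau;0,x_0,K,\alpha)\|\le\tfrac34\|x_0\|$ for every $\alpha\in\G(T',\rho T')$ and every $x_0$. If not, there are $T'_n\downarrow0$, signals $\alpha_n\in\G(T'_n,\rho T'_n)$ and unit vectors $x_0^n$ with $\|x(\tau;0,x_0^n,K,\alpha_n)\|>\tfrac34$; since $\alpha^{(n)}(s):=\alpha_n(T'_ns)\in\G(1,\rho)$ exhibits $\alpha_n(t)=\alpha^{(n)}(\nu_nt)$ with $\nu_n=1/T'_n\uparrow\infty$, weak-$\star$ compactness and Lemma~\ref{propri1} give, along a subsequence, that $\alpha_n$ converges weak-$\star$ to some $\alpha_\star$ with values in $[\rho,1]$ and $x_0^n\to x_0$. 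Because $\alpha$ enters \rref{feedback} linearly, solutions depend continuously on $\alpha$ for the weak-$\star$ topology, so $x(\tau;0,x_0^n,K,\alpha_n)\to x(\tau;0,x_0,K,\alpha_\star)$, whose norm would then exceed $\tfrac34>\tfrac12$, contradicting the previous step. Fixing such a $T'$, shift-invariance of $\G(T',\rho T')$ (Lemma~\ref{proprieta}) together with iteration over consecutive intervals of length $\tau$ — the gaps being controlled by the uniform bound on the transition matrix of \rref{feedback} over $[0,\tau]$ — yields uniform exponential stability, i.e.\ $K$ is a $(T',\rho T')$-stabilizer; the scaling reduction then produces a $(T,\mu)$-stabilizer for the prescribed $T,\mu$. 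The main obstacle is this last step: establishing the weak-$\star$ continuous dependence of the solutions of \rref{feedback} on $\alpha$ (which is exactly what lets Lemma~\ref{propri1} feed into the stability analysis) and converting a uniform contraction of the time-$\tau$ map over the whole class $\G(T',\rho T')$ into a uniform exponential estimate.
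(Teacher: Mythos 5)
Your proof is correct and follows essentially the same strategy as the paper's: reduce via the dilation symmetry of $(J_n,b)$ to the question of finding a single $(T',\rho T')$-stabilizer, stabilize the limiting switched system with gain ranging in $[\rho,1]$ by a common quadratic Lyapunov function, and then transfer this to $\G(T',\rho T')$ for $T'$ small through weak-$\star$ compactness, point~\ref{P1-1} of Lemma~\ref{propri1} and the weak-$\star$ continuity of the input--output map. The only (harmless) deviation is that where the paper invokes the Gauthier--Kupka/Dayawansa high-gain lemma to produce the common Lyapunov function for the switched system, you give a self-contained backstepping construction, which is a valid substitute.
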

\begin{proof}
In the special case of the  $n$-integrator
system \r{feedback} becomes
\be\label{NIf}
\left\{ \ba{lcl}
\dot x_j&=&x_{j+1},\ \ \mbox{for $j=1,\dots,n-1$},\\
\dot x_n&=&-\alpha(t) (k_1 x_1+\cdots+k_n x_n)\,, \ea
\right.
\ee
where  $K=\T{(k_1,\dots,k_n)}$.

 For every $\lbb>0$, define 
 \be\label{Dnv}
 D_{n,\lbb}=\diag(\lbb^{n-1},\dots,\lbb, 1).
 \ee
As done in \cite{MCSS} in
the case $n=2$, one easily checks that, in accordance with
\be\label{elementary}
\lbb D_{n,\lbb}^{-1} J_n D_{n,\lbb}=J_n,\ \ \ D_{n,\lbb}
b=b,
\ee
the time-space
transformation
\be\label{tst0} x_\lbb(t)=D_{n,\lbb}^{-1} x(\lbb
t)\,,\quad \forall t\geq \frac {t_0}\nu,
\ee
of the trajectory
$x(\cdot)=x(\cdot\,;t_0,x_0,K,\alpha)$
%\ee
satisfies
\brs \frac{d}{dt}x_\lbb(t)&=&J_n x_\lbb(t)-\al(\lbb t)\lbb b
K^T D_{n,\lbb} x_\lbb(t),
\ers
that is,
\be\label{Nro}
x_\lbb(\cdot)=x(\cdot\,;t_0/\nu,%\lbb^{n-1}
D_{n,\lbb}^{-1}
x_0,\lbb D_{n,\lbb} K,\alpha(\lbb\, \cdot)).
\ee
As a consequence, \r{NIf} admits a
$(T,\mu)$-stabilizer if and only if it admits a
$(T/\lbb,\mu/\lbb)$-stabilizer.
More precisely,
$K$ is a
$(T,\mu)$-stabilizer if and only if $\lbb D_{n,\lbb} K$ is a
$(T/\lbb, \mu/\lbb)$-stabilizer.

%Assume by contradiction that no gain $K$ is a $(T,\mu)$-stabilizer. 
%Thanks to the preceding remark, this implies that,  for every $\lbb>0$, no gain $K$ is a 
%$(T/\lbb,\mu/\lbb)$-stabilizer. 
%The contradiction will be a consequence of the existence of some  
%$\bar{K}$ satisfying a suitable condition and  assuming that it is not a $(T/\lbb,
%\mu/\lbb)$-stabilizer, whatever the value of $\lbb$.

%As a consequence of this assumption, for every $\nu>0$ there exists $\al_\nu\in \G(T/\nu,\mu/\nu)$
%that destabilizes, in a suitable sense, \r{NIf}.
%Considering all the possible limit points of $(\al_\nu)_{\nu>0}$ as $\nu\to\infty$ (see
%point~\ref{P1-1} of Lemma~\ref{propri1}),
Let us introduce, for every gain $K$,  the 
%{\it limit
switched system %}
\be\label{sw_GK}
 \left\{ \ba{lcl}
\dot x_j&=&x_{j+1},\ \ \mbox{for $j=1,\dots,n-1$},\\
\dot x_n&=&-\alpha_\star(t) (k_1 x_1+\cdots+k_n x_n), \ea
\right.\ \ \ \ \ \ \ \ \ \ \ \al_\star\in L^\infty(\Rp,[\mu/T,1]).
\ee

Recall that 
%The gain $K$ will be selected by asking 
%it to stabilize 
\r{sw_GK} is said to be {\it globally uniformly exponentially stable} as a switched system 
%, that is, 
% We recall that $K$ stabilizes 
%\r{sw_GK} if 
if 
the origin is globally exponentially stable, uniformly with respect to 
$\al_\star\in L^\infty(\Rp,[\mu/T,1])$,  for the dynamics of \r{sw_GK}. (For this and other notions of stability of switched systems see, for instance, \cite{liberbook}.) 
For every $K$ such that 
%In order to check the existence of such a stabilizer, assume that
$k_1\ne 0$, define $X_1=k_1 x_1+\cdots+k_n x_n$, $X_2=k_1
x_2+\cdots+k_{n-1} x_n$, \dots, $X_n=k_1 x_n$. 
%Although such a
%change of variables depends on $K$, 
The global uniform exponential stability of  \r{sw_GK} is clearly equivalent to
that of 
\be\label{sw_GK_X} \dot X_j=X_{j+1}-\al_\star \bar k_j X_1,\
\ \ \ \ \ j=1,\dots,n,\ \ \al_\star(t)\in [\mu/T,1], \ee 
where $\bar k_{j}=k_{n+1-j}$ and, by convention, $X_{n+1}=0_{n}$. 

It has been proven in Gauthier and Kupka
\cite[Lemma 4.0]{GK} (where the result is attributed to W.P. Dayawansa), 
that there exist 
$\overline{K}\in\R^n$, a scalar $\gamma>0$ and a symmetric positive
definite $n\times n$ matrix $S$ such that 
\be\label{GK} 
\T{(J_n-\bar\alpha {\overline{K}}(1,0,\dots,0))}S+S(J_n-\bar \alpha
{\overline{K}}(1,0,\dots,0))\leq -\gamma \Id_n, 
\ee 
for every
(constant) $\bar \alpha\in [\mu/T,1]$. 

Hence, there exist 
a gain $K\in \R^n$ such that \r{sw_GK} 
is globally uniformly exponentially stable  
and a positive definite
matrix $S'$ such that the 
quadratic Lyapunov
function
 $V(x)=\T{x}S' x$ decreases
uniformly on all trajectories of \r{sw_GK}.
In particular, there exists a time $\tau$ such that
every trajectory of \r{sw_GK} starting in
$B^V_2=\{x\in \R^n\mid V(x)\leq 2\}$
at time $0$ lies in
$B^V_1=\{x\in \R^n\mid V(x)\leq 1\}$
for every time larger than $\tau$.

We claim that, for some $\lbb>0$, every trajectory 
of $\dot x=(A-\alpha_\lbb(t) b \T{K}) x$
with initial condition in $B_2^V$ and 
corresponding to  a
$(T/\lbb,\mu/\lbb)$-signal $\al_\lbb$  stays in $B_1^V$ for every time larger than 
$2\tau$. (In particular, by homogeneity, $K$ is a $(T/\lbb,\mu/\lbb)$-stabilizer
and thus $\lbb^{-1}D_{n,\lbb}^{-1}K$ is a $(T,\mu)$-stabilizer.)
 Assume, by contradiction, that for every $l\in\N$ 
there exist 
$x_{0,l}\in B_2^V$, $t_l\in[2\tau,4\tau]$ and $\al_l\in \G(T/l,\mu/l)$ such that 
\be\label{da_contraddire}
x(t_l;0,x_{0,l},K,\al_l)\not\in B_1^V\ \
\mbox{ for every $l\in\N$}.
\ee
By compactness of $B_2^V\times [2\tau,4\tau]$ and by weak-$\star$ compactness of
$L^\infty(\Rp,[0,1])$, we can assume that, up to extracting a
subsequence, $x_{0,l}\to x_{0,\star}\in B_2^V$, $t_l\to t_\star\in
[2\tau,4\tau]$ and $\al_l$ converges weakly-$\star$ to
$\alpha_\star\in L^\infty(\Rp,[0,1])$ as $l$ goes to infinity. Then
$x(t_l;0,x_{0,l},K,\al_l)$ converges, as $l$ goes to infinity, to
$x(t_\star;0,x_\star,K,\al_\star)$ (see \cite[Appendix]{MCSS} for
details). Since $\al_\star\geq \mu/T$ almost everywhere
(point~\ref{P1-1} of Lemma~\ref{propri1}), then $\al_\star$ can
be taken as an admissible signal in \r{sw_GK}.

By homogeneity of the linear system \r{sw_GK}
and because $t_\star\geq 2\tau$, we have that
$$V(x(t_\star;0,x_\star,K,\al_\star))\leq 1/2.$$
Therefore, for $l$
large enough $x(t_l;0,x_{0,l},K,\al_l)\in B_1^V$ contradicting
\r{da_contraddire}.
\end{proof}

Let us now turn the general case where the spectrum of $A$ has non-positive real part. The main technical difficulties
in order to adapt the proof of Theorem~\ref{theo} 
come from the fact that $A$ may have several Jordan blocks of different sizes.

\begin{thm}\label{thm2}
Let $(A,b)\in M_n(\R)\times\R^n$ be a controllable pair and assume
that the eigenvalues of $A$ have non-positive real part. Then, for
every $T,\mu$ with $T\geq\mu>0$ there exists a $(T,\mu)$-stabilizer for
\r{system1}.
\end{thm}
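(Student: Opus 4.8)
\noindent\emph{Proof sketch.}
The plan is to reproduce the three-step architecture of the proof of Theorem~\ref{theo}: reduce to a switched system driven by a signal valued in $[\mu/T,1]$, stabilize that switched system together with a Lyapunov function whose sublevel sets are forward invariant, and transfer back to the PE system by a weak-$\star$ compactness argument. The passage to the limit and the compactness argument survive essentially verbatim, since they rely only on Lemma~\ref{propri1} and on continuous dependence of trajectories on $\al$ in the weak-$\star$ topology (as in \cite[Appendix]{MCSS}); the work is in adapting the steps that used the homogeneity of the $n$-integrator. A first, painless reduction: since $(A,b)$ is controllable, $A$ is cyclic, so put it in real Jordan form $A=\diag(A_-,A_c)$ with $\sigma(A_-)\subset\{\Re<0\}$ and $\sigma(A_c)\subset i\R$, and split $b$ accordingly, so that $(A_-,b_-)$ and $(A_c,b_c)$ are both controllable. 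If $K_c$ is a $(T,\mu)$-stabilizer for $\dot x_c=A_c x_c+\al b_c u$, take $K$ equal to $0$ on the $A_-$-block and to $K_c$ on the $A_c$-block: the $x_c$-dynamics close up and are uniformly exponentially stable, $u=-\T{K_c}x_c$ decays exponentially uniformly in $\al$, and, $A_-$ being Hurwitz, a variation-of-constants estimate forces $x_-$ to decay as well. Hence it suffices to treat $\sigma(A)\subset i\R$.

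\medskip\noindent\textbf{Dilation and derotation.} Write $A_c=\diag(J_{n_0},A_1,\dots,A_h)$: a single $n_0$-integrator block (eigenvalue $0$, possibly absent) and, for each $j$, a single real Jordan block $A_j$ of Jordan size $n_j$ attached to $\pm i\om_j$, $\om_j>0$. Combining blockwise integrator dilations $D_{n_j,\lbb}$ with the time-scaling $t\mapsto\lbb t$ of Theorem~\ref{theo} turns $(A_c,b_c)$ into $(A_c^{(\lbb)},b_c)$, where $A_c^{(\lbb)}$ has the same Jordan structure but all nonzero frequencies multiplied by $\lbb$; hence it is enough to find, for every large $\lbb$, a $(T/\lbb,\mu/\lbb)$-stabilizer for $A_c^{(\lbb)}$. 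Conjugating each oscillatory block by the time-dependent rotation generated by its $\om_j A_0$-part removes those rotations, replacing $b_c$ by a rotating input vector of the type of $v(t)$ in Lemma~\ref{propri1}(2) and the feedback term $\al(t)b_c\T{K}$ by a quadratic term of the type $\al^C$. Lemma~\ref{propri1} then certifies the matrix persistent-excitation inequality for the rescaled, rotated signals and the uniform positivity of their weak-$\star$ limit points on the ``driven'' directions, so that the limiting object is a switched system of coupled integrator blocks of sizes $n_0,n_1,\dots,n_h$, excited through a single input, and with no surviving frequencies.

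\medskip\noindent\textbf{Stabilizing the limit --- the main obstacle.} For one integrator block this is precisely \cite[Lemma~4.0]{GK}; the difficulty the statement warns about is that $A_c$ may carry several Jordan blocks of \emph{different sizes} coupled through the single input, so that a block-diagonal quadratic Lyapunov function assembled from the individual Gauthier--Kupka certificates need not be invariant, because of the cross terms produced by the shared input. I would handle this either by a gain/time-scale separation between the blocks --- dilating the $j$-th block with widely separated gains so that its diagonal decay dominates the coupling --- or by proving directly a ``simultaneous'' Gauthier--Kupka estimate for the pair $\bigl(\text{block-diagonal integrators},\ \text{common input}\bigr)$. In either case, thanks to the convex-combination identity $A-\bar\al\,b\T{K}=\frac{1-\bar\al}{1-\rho}(A-\rho\,b\T{K})+\frac{\bar\al-\rho}{1-\rho}(A-b\T{K})$ with $\rho=\mu/T$, it suffices to produce a single $S>0$ and $\gamma>0$ valid at the two endpoints $\bar\al\in\{\rho,1\}$, after which a common quadratic Lyapunov function $V(x)=\T{x}S'x$ with forward-invariant sublevel sets is available for the whole switched system on $[\mu/T,1]$.

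\medskip\noindent\textbf{Transfer to the PE system.} With such $K$ and $V$ one repeats the contradiction argument of Theorem~\ref{theo}: if no $(T/\lbb_l,\mu/\lbb_l)$-stabilizer worked along some $\lbb_l\to\infty$, extract weak-$\star$ convergent signals, pass to the limit to get a trajectory of the limiting switched system, note that after a uniform time it enters a small sublevel set of $V$, and obtain a contradiction; unwinding the dilation then yields a $(T,\mu)$-stabilizer for every $T\geq\mu>0$. Besides the obstacle already highlighted, the remaining care is bookkeeping: checking that rescaling, derotation and the blockwise, size-dependent dilation commute in the right way, and that the perturbation coming from the Hurwitz block $A_-$ is genuinely negligible in the limit.
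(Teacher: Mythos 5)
Your overall architecture is the paper's: split off the Hurwitz part, reduce to $\sigma(A)\subset i\R$, combine blockwise integrator dilations with time-rescaling and a derotation of each oscillatory block, invoke Lemma~\ref{propri1}(2) to get a matrix persistent-excitation bound and a weak-$\star$ limit $C_\star\geq\xi\,\Id$, stabilize the limit switched system with a quadratic Lyapunov function, and transfer back by the compactness/contradiction argument of Theorem~\ref{theo}. All of that matches. But the step you flag as ``the main obstacle'' is exactly where the paper's real work lies, and you do not carry it out: you name two candidate strategies without executing either. The paper's solution is a specific construction: impose the shared-gain structure $k^l_\xi=\bar k_{r_l+1-\xi}$ (the same scalars $\bar k_1,\dots,\bar k_{r_1}$ distributed across all blocks according to their sizes), lift the limit system to the redundant coordinates $Y_l$ in the subspace $E^h_{m_1,\dots,m_{r_1}}$, where it takes the triangular form $\dot Y_l=Y_{l+1}-\bar k_l\,p_l\,C_\star Y_1$ with orthogonal projections $p_l$ encoding the block sizes, and then prove Proposition~\ref{ult0}, a Gauthier--Kupka-type induction on $r_1$ that builds the common Lyapunov matrix $S$ uniformly over all symmetric $C_\star$ with $\xi\,\Id\leq C_\star\leq\Id$. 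Your first alternative (widely separated per-block gains) is dubious precisely because the blocks share a single input, so the gains cannot be tuned block by block without re-introducing the cross terms you are trying to kill; your second alternative is, in substance, Proposition~\ref{ult0}, which you would still have to prove.

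A second, smaller but genuine error: your convex-combination reduction to the two endpoints $\bar\al\in\{\rho,1\}$ is only valid when the uncertain coefficient is a scalar in $[\rho,1]$, i.e., in the pure $n$-integrator case of Theorem~\ref{theo}. After derotation the uncertain coefficient in the limit system is the symmetric matrix $C_\star(t)$ constrained to the order interval $\xi\,\Id\leq C_\star\leq\Id$, whose extreme points are not two matrices but all symmetric matrices with spectrum in $\{\xi,1\}$; checking a Lyapunov inequality at $\xi\,\Id$ and $\Id$ does not certify it on the whole interval. The paper avoids this entirely: its induction uses only the lower bound $C_\star\geq\xi\,\Id$ and absorbs everything else by taking $\bar k_1$ large at each inductive step.
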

\begin{proof}
%\noindent {\it Proof of Theorem~\ref{thm2}.}
Fix a controllable pair $(A,b)\in M_n(\R)\times \R^n$.
Up to a linear change of
variable, $A$ and $b$ can be written as
$$
A=\begin{pmatrix}
A_1&A_3\\
0_{(n-n')\times n'}&A_2
\end{pmatrix}, \ \ \
b=\begin{pmatrix} b_1\\b_2
\end{pmatrix},
$$
where $n'\in\{0,\dots,n\}$, $A_1\in M_{n'}(\R)$ is Hurwitz and all the eigenvalues of $A_2\in M_{n-n'}(\R)$ have zero
real part.  From the
controllability assumption, we deduce that $(A_2,b_2)$ is
controllable. Setting $x=(x_1^T,x_2^T)^T$ according to the above decomposition,
system \rref{system} can be written as
\br
\dot x_1&=&A_1x_1+A_3x_2+\alpha(t)b_1u,\label{h1}\\
\dot x_2&=&A_2x_2+\alpha(t)b_2u.\label{h2}
\er
If %Assume that 
there
exists a $(T,\mu)$-stabilizer $K_2$  for
 \r{h2}, then 
 $$K=\lp\ba{c}0_{n'}\\ K_2\ea\rp$$
%where all entries of ${\bf 0}$ are null, 
is a $(T,\mu)$ stabilizer for \r{system}. 
It is therefore enough to prove the theorem under the extra hypothesis 
 that all eigenvalues of
$A$ lie on the imaginary axis.

Denote the distinct eigenvalues of $A$ by $\pm i\omega_j$, $j\in\{j_0,j_0+1,\dots,h\}$,
where $j_0=1$ if $0\not\in\sigma(A)$ and
$j_0=0$ with $\omega_0=0$ otherwise.
For every $j\in\{0,\dots,h\}$, let $r_j$ be the
multiplicity of
$i \omega_j$, with the convention that $r_0=0$ if $0\not\in\sigma(A)$.

Assume that $A$ is decomposed in Jordan blocks.
Since $(A,b)$ is controllable,
then
$A$ has
a unique (complex) Jordan block associated with each $\{i\omega_j,
-i\omega_j\}$, $j_0\leq j\leq h$. (Otherwise,
the rank of the matrix
$(A-i\omega_j \Id_n \mid  b)$ would be strictly smaller than $n$,
contradicting the Hautus test for controllability.)
Therefore, for every $j=1,\dots,h$, the Jordan block associated to $i\om_j$ is
$\omega_j A^{(j)}+J^{C}_{r_j}$,
where $A^{(j)}=\diag(A_0,\dots,A_0)\in M_{2r_j}(\R)$ and $J^C_{r_j}\in M_{2r_j}(\R)$
is defined as
$$
%\begin{equation}\label{Jn}
J^C_{r_j}= \lp\ba{cccccc}
0_{2\times 2}&\Id_2&0_{2\times 2}&\cdots&\cdots&0_{2\times 2}\\
0_{2\times 2}&0_{2\times 2}&\Id_2&0_{2\times 2}&\cdots&0_{2\times 2}\\
&&&&&\\
\vdots&&\hspace{-7mm}\ddots&\hspace{-7mm}\ddots&\hspace{-5mm}\ddots\hspace{5mm}
\ddots&\vdots\\
&&&&&0_{2\times 2}\\
0_{2\times 2}&&\cdots&0_{2\times 2}&0_{2\times 2}&\Id_2\\
0_{2\times 2}&&\cdots&\cdots&0_{2\times 2}&0_{2\times 2} \ea\rp,
$$
that is, in terms of the Kronecker product, $J_{r_j}^C=J_{r_j}\otimes \Id_2$.

All controllable
linear control systems associated with a pair
$(A,b)$ that have in common
the eigenvalues of $A$, counted according to their multiplicity,
 are state-equivalent, since
they can be transformed by a
 linear transformation of coordinates
 into the same system
 under companion
 form.
We exploit such an equivalence to deduce that, up to a linear
transformation of coordinates, \r{system} can be written as
\be\label{NIcP0} \left\{ \ba{lcl} \dot x_0&=&J_{r_0}x_0+\alpha
b^{0}u,\\
\dot x_j&=&(\omega_j A^{(j)}+J^{C}_{r_j})x_j + \alpha b^{j}u,\ \
\mbox{for $j=1,\dots,h$},  \ea \right.
\ee
where
$b^{0}$ and
$b^{j}$ are respectively the vectors of $\R^{r_0}$  and $\R^{2r_j}$
with all coordinates equal to zero except the last one that is equal to one.
Here $x_0\in \R^{r_0}$ and $x_j\in \R^{2r_j}$ for $j=1,\dots,h$

Write the feedback law as
$u=-K^Tx=-K_0^Tx_0-\sum_{l=1}^h K_l^Tx_l$
with
$K_0\in \R^{r_0}$ and $K_j\in\R^{2r_j}$
for every $1\leq j\leq h$.

For every $\nu>0$ consider the following change of time-space variables:
%applying to  $x_0$ the same transformation as in \rref{tst0}, 
let
\brs
y_0(t)&=& D_{r_0,\nu}^{-1} x_0(\nu t),\\
y_j(t)&=&(D_{r_j,\lbb}^C)^{-1} e^{-\nu t A^{(j)}}x_j(\nu t), \ \ \ \hbox{for }
1\leq j\leq h,
\ers
 where $D_{r_0,\lbb}$ is defined as in \r{Dnv} and 
 $$D_{r_j,\lbb}^C=D_{r_j,\lbb} \otimes \Id_2\in M_{2r_j}(\R).$$

In accordance with
$$%\be\label{elementaryC}
\lbb (D_{r_j,\lbb}^C)^{-1} J^C_{r_j} D_{r_j,\lbb}^C=J^C_{r_j},\ \ \ D_{r_j,\lbb}^C b^j=b^j,
$$%\ee
we end
up with the following linear time-varying system
\be\label{NIcP1}
\left\{ \ba{lcl}
\dot y_0&=&J_{r_0}y_0-\alpha_\nu(t)
b^{0}\big(K_{0,\nu}^Ty_0+\sum_{l=1}^h
K_{l,\nu}^T e^{\nu t \omega_l A^{(l)}}  y_l\big),\\
\dot y_j&=&J^{C}_{r_j}y_j - \alpha_\nu(t)
b^{j,\nu}(t)\big(K_{0,\nu}^T y_0+\sum_{l=1}^h K_{l,\nu}^T e^{\nu t \omega_l A^{(l)}} y_l\big),
\ \ \mbox{for $j=1,\dots,h$},
\ea \right.
\ee
where $K_{0,\nu}=\nu
D_{r_0,\nu} K_0$, $K_{j,\nu}=\nu D_{r_j,\nu}^C K_j$
and $b^{j,\nu}(t)=e^{-\nu t \omega_j A^{(j)}}b^{j}$ for
$j=1,\dots,h$.
Given $\nu>0$, \rref{feedback} admits a $(T,\mu)$-stabilizer if and only if \rref{NIcP1}
admits a $(T/\nu,\mu/\nu)$-stabilizer.

%As done in the proof of Theorem~\ref{theo},
%we
%prove the $(T,\mu)$-stabilizability of \rref{feedback}
%reasoning  by contradiction.
%This is done by
%fixing a suitable gain $K$ and assuming
%that $K_\nu={K}$ is not a $(T/\nu, \mu/\nu)$-stabilizer of \rref{NIcP1}, whatever the value
%of $\nu$.
%We then consider  the possible weak-$\star$ limit points as $\nu$
%tends to infinity of the time-dependent coefficients in \rref{NIcP1}
%(with an arbitrary choice of $\al_\nu\in \G(T/\nu,\mu/\nu)$ for
%every $\nu>0$). These limit points can be characterized using
%point~\ref{per} of Lemma~\ref{propri1}. We associate with
%\rref{NIcP1} a set of limit points as $\nu$ tends to infinity, which
%are linear time-dependent systems of the type $\dot
%y=A_\star({K},t)y$. The time-dependent matrix
%$A_\star(K,\cdot)$ happens to take values in a compact set
%${\cal{A}}_{{K}}$ of matrices only depending on $T,\mu,A$ and ${K}$.
%The final contradiction is reached by proving the existence of ${K}$
%only depending on $T,\mu$ and $A$ so
%that 
%all the matrices in ${\cal{A}}_{{K}}$ are Hurwitz and admit a common
%quadratic Lyapunov function.

%In order to identify the possible weak-$\star$ limit points of
%system~\rref{NIcP1}, 

For each $l=1,\dots,h$, assume that $K_l^T$ is of the form
$(0,k_1^l,\dots,0,k_{r_l}^l)$, that is,
$$ K_l^T=\K_l\otimes (0,1),\ \ \K_l=(k_1^l,\dots,k_{r_l}^l).$$
For uniformity of notations, we also write
$\K_0=K_0^T$.

Let $(\al_\nu)_{\nu>0}$ be a family of signals satisfying
$\al_\nu\in \G(T/\nu,\mu/\nu)$ for every $\nu>0$.
%let us compute the
%weak-$\star$ limit points of
%$\alpha_\nu(t) b^{j,\nu}(t)K_{l}^T e^{t\nu \omega_l A^{(l)}}$, $j_0\leq
%j,l\leq h$, as $\nu$ tends to infinity.
Consider a sequence $(\nu_n)_{\n\in\N}$ going to infinity as $n\to \infty$ such
that the matrix-valued curve $\al_{\nu_n}^C(\cdot)$, defined as in \r{peC}, has a weak-$\star$ limit as $n\to\infty$
 in $L^\infty(\Rp,M_{2h+1-j_0}(\R))$.
Denote the weak-$\star$ limit by $C_\star$.
It follows form point~\ref{per} of Lemma~\ref{propri1} that
$C_\star(t)$ is symmetric and
$$
C_\star(t)\geq \xi \Id_{2h+1-j_0},
$$
for almost every $t\geq 0$,
for some positive scalar $\xi$ only depending on $T,\mu$ and $\sigma(A)$.

Define the $2\times 2$ time-dependent matrices $C_{jl}$, $1\leq j,l\leq h$,
the $1\times 2$ time-dependent matrices $C_{0j}$, $1\leq j\leq h$, and the
scalar time-dependent signal $C_{00}$ by the relation
$$
C_\star=(C_{jl})_{j_0\leq j,l\leq h}.
$$

Consider, for every $n\in \N$, system
\rref{NIcP1} with $\nu=\nu_n$ and $K_{\nu}=K$.
All coefficients of the sequence of systems obtained in this way  
 are
weakly-$\star$ convergent as $n$ goes to infinity. 
The limit system is 
%taking all such limits
%is equal to
\be\label{NIcP4}
\left\{ \ba{lcl} \dot
y_0&=&J_{r_0}y_0-b^{0}\big(
C_{00} {\K}_0 y_0+\sum_{l=1}^h
C_{0l}
(\K_{l}\otimes \Id_2)y_l\big),\\
\dot y_j&=&J^{C}_{r_j}y_j
-(b^j\otimes \Id_2)\left(C_{0j}^T \K_0 y_0+\sum_{l=1}^h
C_{jl}
(\K_{l}\otimes \Id_2) y_l\right), \ \ \mbox{for
$j=1,\dots,h$}. \ea \right.
\ee
We consider \r{NIcP4} as a switched system depending on $K$ 
in which the admissible switching laws are all the time-varying matrix-valued coefficients $C_{jl}$ obtained from the limit procedure described above.

In the sequel, we only treat the case where $0$ is not an eigenvalue
of $A$. The general case presents no extra mathematical difficulties
and can be treated similarly.
Then system \rref{NIcP4}
takes the form
\be\label{NIcP44}
\dot y_j=J^{C}_{r_j}y_j
-(b^j\otimes \Id_2)\sum_{l=1}^h
C_{jl}%\overline
({\K}_{l}\otimes \Id_2) y_l, \ \ \mbox{for
$j=1,\dots,h$}.
\ee
We also assume that the multiplicities
$r_1,\dots,r_h$
of the eigenvalues of $A$ form a non-increasing sequence.

Let us impose a further restriction on the structure of the feedback $%\overline
{K}$.
Assume that there exist $\bar k_1,\dots,\bar k_{r_1}\in\R$, each of them
different from zero,
such that
$$k_\xi^l=\bar k_{r_l+1-\xi},\ \ \ \ \hbox{for }1\leq l\leq h\hbox{ and }1\leq \xi\leq r_l.
$$

We find it  useful to provide an equivalent representation of
system~\rref{NIcP44} in a higher dimensional vector space,
introducing some redundant variables. In order to do so, for $l\in\{1,\dots,r_1\}$, associate to $y=(y_1,\dots,y_h)$ the $2h$-vector
$$Y_l=\left(\ba{c}
(\K_1\otimes \Id_2)(J^{C}_{r_1})^{l-1}y_1\\
\vdots\\
(\K_h\otimes \Id_2)(J^{C}_{r_h})^{l-1}y_h
\ea\right).$$
Notice
that 
the last $2h-2m_l$ coordinates of $Y_l$ are
equal to zero, where
$m_l$ denotes the number of Jordan blocks of $A$
of size not smaller than $l$, that is,
$$m_l=\#\{j\mid 1\leq j\leq h,\ r_j\geq l\}.$$
For $l\in\{1,\dots,r_1\}$, let
$p_l$ be the orthogonal projection of $\R^{2 h}$ onto $\R^{2 m_l}\times\{0_{2h-2 m_l}\}$, i.e.,
$$p_l=\diag(\Id_{2 m_l},0_{(2h-2m_l)\times(2h-2m_l)}).$$
By construction we have
$p_1=\Id_{2r_1}$ and $p_lY_j=Y_j$ for $1\leq l\leq j\leq r_1$.

Notice that the map $(y_1,\dots,y_h)\mt (Y_1,\dots,Y_{r_1})$ is a bijection
between $\R^n$ and the subspace $E^h_{m_1,\dots,m_{r_1}}$ of $\R^{2h r_1}$ defined by
$$E^h_{m_1,\dots,m_{r_1}}=\{(Y_1,\dots,Y_{r_1})\mid Y_l\in \R^{2 h}\mbox{ and }p_l Y_l=Y_l\mbox{ for }l=1,\dots,r_1\}.$$
Indeed, the matrix corresponding to the transformation is upper triangular, with
the $\bar k_l$'s as elements of the diagonal, if one considers the
following choice of coordinates on $E^h_{m_1,\dots,m_{r_1}}$: 
take the first two coordinates of the first copy of $\R^{2 h}$, then the first two of its second copy and so on until the $r_1^\mathrm{th}$ copy; then take the third and fourth coordinates of the first copy of $\R^{2 h}$ and repeat the procedure until its $r_2^\mathrm{th}$ copy; and so on, until the last two coordinates of the $r_h^\mathrm{th}$ copy of $\R^{2 h}$.

If $y$ is a solution of system~\rref{NIcP44}, then
$Y=(Y_1,\dots,Y_{r_1})$ is a trajectory in $E^h_{m_1,\dots,m_{r_1}}$
satisfying the system of equations
\be\label{NIcP5}
\dot{Y_l}=Y_{l+1}
-\bar k_l p_l C_\star Y_1, \ \ \mbox{for
$l=1,\dots,r_1$},
\ee
where, by convention, $Y_{r_1+1}=0_{2h}$.

We %are left to prove 
prove in the following proposition
that there exist $\bar k_1,\dots,\bar
k_{r_1}\ne 0$ such that system~\r{NIcP5}, restricted to
$E^h_{m_1,\dots,m_{r_1}}$, is exponentially stable uniformly with
respect to all time-dependent measurable 
 symmetric matrices
 $C_\star$  satisfying
 $\xi \Id_{2h} \leq C_\star(t)\leq \Id_{2h}$ almost everywhere.

%Notice that such a system is well defined,
%since $E^h_{m_1,\dots,m_{r_1}}$ is invariant for the dynamics of
%\r{NIcP5}.

%
%The following proposition, calqued on \cite[Lemma 4.0]{GK},
%states  the stabilizability
%of all systems of the type \r{NIcP5}
%defined on $E^h_{m_1,\dots,m_{r_1}}$ for an arbitrary choice of $h,r_1\in \N$
%and $h\geq m_1\geq m_2\geq\cdots\geq m_{r_1}\geq 0$,
%uniformly with respect to any measurably varying time-dependent
% symmetric matrix
% $C_\star$  satisfying
% $\xi \Id_{2h} \leq C_\star(t)\leq \Id_{2h}$ almost everywhere.

\begin{prop}\label{ult0}
For every $h,r_1\in\N$, for every non-increasing sequence of
non-negative numbers
$m_1,\dots,m_{r_1}$ such that $m_1\leq h$ and for every
 $\xi>0$, there exist $\lambda,\bar k_1,\dots,\bar k_{r_1}>0$ and
  a symmetric positive definite $2hr_1\times 2hr_1$ matrix
  $S$ such that, for
  every $C_\star\in L^\infty(\Rp,M_{2h}(\R))$, if $C_\star(t)$ is
  symmetric and satisfies $\xi \Id_{2h} \leq C_\star(t)\leq \Id_{2h}$
  almost everywhere, then
  any solution
$Y: \mRp\rightarrow E^h_{m_1,\dots,m_{r_1}}$
of \rref{NIcP5}
satisfies for almost every $t\geq 0$ the inequality
$$
\frac d{dt}\lp Y(t)^T S Y(t)\rp\leq -\lambda \|Y(t)\|^2.
$$
\end{prop}
\begin{proof}
The proof is similar to that of \cite[Lemma 4.0]{GK} and goes by induction
on $r_1$.

We start the argument for $r_1=1$, with $h\in \N$, $0\leq m_1\leq h$ and
$\xi>0$ arbitrary. In that case the
system reduces to
$$
\dot{Y_1}=-\bar k_1 p_1 C_\star Y_1,
$$
with $Y_1\in E^h_{m_1}=\R^{2 m_1}\times \{0_{2h-2 m_1}\}$. The conclusion
follows by taking $\bar k_1=1$ and $S=\Id_{2h}$.

Let $r_1$ be a positive integer. Assume that the proposition holds true for every positive
integer $j\leq r_1$ and for  every $h\in\N$, $0\leq m_1\leq\cdots \leq m_{r_1}\leq h$ and $\xi>0$.
Consider system \rref{NIcP5} where $l$ runs between $1$ and $r_1+1$.
Set $Y=(Y_2^T,\dots,Y_{r_1+1}^T)^T$.
Note that if $(Y_1^T,\dots,Y_{r_1+1}^T)^T\in E^h_{m_1,\dots,m_{r_1+1}}$, then
$Y\in E^h_{m_2,\dots,m_{r_1+1}}$.
The dynamics of  $(Y_1,Y)$ are given by
$$%\be\label{NIcP6}
\left\{ \ba{lcl}
\dot Y_1=-\bar k_1 C_\star Y_1+\Pi_1 Y,\\
\dot Y=-\overline{K} C_\star Y_1+%P
{\cal J}%_{r_1}
Y,
\ea \right.
$$%\ee
where
\brs
\Pi_1&=&(\Id_{2h},0_{2h\times 2h(r_1-1)}),\\
\overline{K}&=&
\left(\ba{c} \bar k_2 p_2 \\ \vdots\\ \bar k_{r_1+1} p_{r_1+1}\ea\right),\\
{\cal J}%_{r_1}
&=&J_{r_1}\otimes \Id_{2h}.
\ers

Define the linear
change of variables
$(Z_1,Z)$ given by
$$%\be\label{Y-Z}
Z_1=Y_1, \ \ \ \ Z=Y+\Omega
Y_1,
$$%\ee
where
$$\Omega=\left(\ba{c} \eta_2 p_{2}\\ \vdots \\ \eta_{r_1+1}p_{r_1+1}\ea\right)$$
and the $\eta_l$'s are scalar constants to be chosen later. Note
that $Z$ belongs to $E^h_{m_2,\dots,m_{r_1+1}}$ if $Y$ does.
The dynamics of  $(Z_1,Z)$ is given by
\be\label{NIcP7}
\left\{ \ba{lcl}
\dot Z_1=(-\bar k_1C_\star +\Pi_1 \Omega)Z_1+\Pi_1Z,\\
\dot Z=-\big((\overline{K}+\bar k_1\Omega) C_\star+ ({\cal J}%_{r_1}
+\Omega\Pi_1)\Omega\big)Z_1+({\cal J}%_{r_1}
+\Omega\Pi_1)Z. \ea \right. \ee Let us apply the induction
hypothesis to the system \beq\label{ult2}
\dot Z=({\cal J}%_{r_1}
+\Omega\Pi_1)Z, \eeq which is well defined on
$E^h_{m_2,\dots,m_{r_1+1}}$ and has the same structure as
system~\r{NIcP5}. (Here $\C_\star\equiv \Id_{2h}$ and therefore one
can take as $\xi$ any positive constant smaller than one.)
We deduce the existence of $\lambda>0$, $\eta_l<0$, $2\leq l\leq
r_1+1$ and a symmetric positive definite matrix $S$ such that $\dot
V(t)\leq -\lambda \|Z(t)\|^2$ where $V(t)=Z(t)^TS Z(t)$ and $Z(t)$
is any trajectory of \rref{ult2} in $E^h_{m_2,\dots,m_{r_1+1}}$.
Therefore,
$$ \left.\left[({\cal J}%_{r_1}
+\Om \Pi_1)^T S+ S({\cal J}%_{r_1}
+\Om \Pi_1)\right]\right|_{E^h_{m_2,\dots,m_{r_1+1}}}\leq -\lb\,
\Id_{E^h_{m_2,\dots,m_{r_1+1}}}.$$

Since $\Om$ is fixed, for every $\bar k_1>0$ there exists a unique
$\overline{K}(\bar k_1)$ such that $\overline{K}(\bar k_1)+\bar
k_1\Om=0_{2 r_1 h\times 2 h}$. Assume that
$\overline{K}=\overline{K}(\bar k_1)$ and notice that the
corresponding  $\bar k_2,\dots,\bar k_{r_1+1}$ are positive. 
%Our freedom of choice is now restricted to $\bar k_1$ alone.

Choose $S'=(1/2)\diag(\Id_{2h},S)$ and define the corresponding Lyapunov function
$W(Z_1,Z)=\|Z_1\|^2/2+Z^TS Z/2$.
If $(Z_1,Z)$ is a trajectory of \r{NIcP7}, then
\brs
\frac{d}{dt}W(Z_1,Z)&=&-Z_1^T ((\bar k_1 C_\star -\Pi_1\Om) Z_1-\Pi_1 Z)
-Z^T S ( ({\cal J}+\Om\Pi_1) \Om Z_1-({\cal J}+\Om\Pi_1)Z)\\
&\leq&Z_1^T (-\bar k_1 C_\star +\Pi_1\Om) Z_1-\lb \|Z\|^2+(\|\Pi_1\|+\|S({\cal J}+\Om\Pi_1)\Om\|)\|Z_1\|\|Z\|\\
&\leq&(-\bar k_1 \xi +\delta_1) \|Z_1\|^2 -\lb \|Z\|^2
+\delta_2 \|Z_1\|\|Z\|,
\ers
where the constants $\delta_1,\delta_2>0$ do not depend on $\bar k_1$.
Since
$$\|Z_1\|\|Z\|\leq  \eps^2 \|Z_1\|^2+\frac{\|Z\|^2}{\eps^2}$$
for every $\eps>0$, then
\brs
\frac{d}{dt}W(Z_1,Z)
&\leq&\lp -\bar k_1 \xi +\delta_1 +\frac{\delta_2}{\eps^2}\rp \|Z_1\|^2+(-\lb+\eps^2 \delta_2)  \|Z\|^2.
\ers
Choosing $\eps^2$ small enough in order to have   $-\lb+\eps^2 \delta_2\leq -\lb/2$ and $\bar k_1$ large enough, we have
$$%\brs
\frac{d}{dt}W(Z_1,Z)
\leq - \frac \lb 2  (\|Z_1\|^2+ \|Z\|^2).
$$%\ers

The proof is concluded, since $(Z_1,Z)$ and $(Y_1,Y)$ are equivalent
systems of coordinates on the space $E^h_{m_1,\dots,m_{r_1+1}}$.
\end{proof}

The proof of
Theorem~\ref{thm2} is 
completed by applying the same contradiction argument as in 
the proof of Theorem~\ref{theo}. 
\end{proof}

\section{Maximal rates of exponential convergence and
divergence}\label{s_maximal-rate}

Let $(A,b)\in M_n(\R)\times \R^n$ be a controllable pair, $K$ belong to $\R^n$
and $T,\mu$ be positive constants such that $T\geq \mu$.
For $\al\in\G(T,\mu)$ let
$\lb^+(\al,K)$ and  $\lb^-(\al,K)$ be, respectively, the maximal and minimal
Lyapunov exponents associated with $\dot x=(A-\al b K^T)x$,
i.e.,
$$
\lb^+(\al,K)= \sup_{\|x_0\|=1}\limsup_{t\to+\infty} \frac{\log(\|x(t;0,x_0,K,\al)\|)}t,\ \ \
\lb^-(\al,K)=\inf_{\|x_0\|=1}\liminf_{t\to+\infty}  \frac{\log(\|x(t;0,x_0,K,\al)\|)}t.
$$

The {\it rate of convergence} (respectively,  the {\it rate of divergence}) associated with the family of systems
$\dot x=(A-\al b K^T)x$, $\al\in \G(T,\mu)$, is defined as
\be\label{td0}
\td(A,b,T,\mu,K)=-\sup_{\al\in
\G(T,\mu)}\lb^+(\al,K)\ \
\ (\hbox{respectively, } \tdD(A,b,T,\mu,K)=\inf_{\al\in \G(T,\mu)}\lb^-(\al,K)).
\ee
 Notice that
\be\label{td-vp}
\td(A,b,T,\mu,K)\leq \min_{\bar\al\in[\mu/T,1]}\min\{-\Re(\sigma(A-\bar\al b
 K^T))\},\ee
 and
$$%\be\label{td-vp1}
\tdD(A,b,T,\mu,K)\leq \min_{\bar\al\in[\mu/T,1]}
\min\{\Re(\sigma(A-\bar\al b
 K^T))\}.
 $$%\ee

Moreover, since a linear change of coordinates $x'=P x$ does not
affect Lyapunov exponents, then
\be\label{chco1} \td(A,b,T,\mu,K)=\td(P A P^{-1},P b,T,\mu,(P^{-1})^T
K), \ee
and
\be\label{chco}
\tdD(A,b,T,\mu,K)=\tdD(P A P^{-1},P b,T,\mu,(P^{-1})^T K).\ee

Define the maximal rate of convergence associated with the PE system
$\dot x=Ax+\al bu$, $\al\in \G(T,\mu)$, as
\be\label{tdd}
\tdd(A,T,\mu)=\sup_{K\in\R^n} \td(A,b,T,\mu,K),
\ee
and similarly, the
maximal rate of divergence as
\be\label{tDD}
\tDD(A,T,\mu)=\sup_{K\in \R^n}\tdD(A,b,T,\mu,K).
\ee
Notice that neither
$\tdd(A,T,\mu)$ nor $\tDD(A,T,\mu)$ depend on $b$, as it
follows from \r{chco1} and \r{chco}.

\begin{rmk}\label{rem2}
Let us collect  some properties of $\tdd$ and $\tDD$ that follow
directly from their definition. First of all, one has \be\label{ttt}
\tdd(A+\lb\Id_n,T,\mu)=\tdd(A,T,\mu)-\lb, \ \
\tDD(A+\lb\Id_n,T,\mu)=\tDD(A,T,\mu)+\lb . \ee 
Then, by time-rescaling, 
\be\label{39+}%$$
\tdd(A,T,\rho T)=\tdd(A/T,1,\rho), \ \ \ \tDD(A,T,\rho
T)=\tDD(A/T,1,\rho).
\ee%$$
Notice moreover that, thanks to (\ref{elementary}), both
$\tdd(J_n,T,\rho T)$ and $\tDD(J_n,T,\rho T)$ only depend on $\rho$
and thus are equal  to $\tdd(J_n,1,\rho)$ and
$\tDD(J_n,1,\rho)$, respectively. Finally, because of point~\ref{rho202} in
Lemma~\ref{proprieta}, $\tdd$ and $\tDD$ are monotone with respect
to their third argument.
\end{rmk}

\begin{rmk}\label{rem3}
Given a controllable pair $(A,b)$ and a class ${\cal{G}}(T,\mu)$ of
PE signals,
whether or not $\tdd$ and $\tDD$ are both infinite can
be understood
as whether or not a pole-shifting
type property holds true for the PE control system $\dot
x=Ax+\al bu$, $\al\in \G(T,\mu)$.
\end{rmk}

The study
of the pole-shifting type property for two-dimensional PE systems
actually reduces to that of their maximal rates of convergence as a consequence
of the following property.

\begin{prop}\label{rdc0}
Consider the two-dimensional PE systems $\dot
x=Ax+\al bu$, $\al\in\caG(T,\mu)$, with $(A,b)$ controllable.
Then $\tdd(A,T,\mu)=+\infty$ if and only if $\tDD(A,T,\mu)=+\infty$.
\end{prop}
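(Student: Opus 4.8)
The plan is to reduce the proposition, via an elementary $2\times2$ normalization, to an equivalence between ``$\tdd=+\infty$'' and ``$\tDD=+\infty$'' for $-A$, and then to establish the latter by a time-reversal argument on finite-time windows. For the normalization, recall that for any $A\in M_2(\R)$ the matrix $-A$ is similar to $A-(\Tr A)\Id_2$: indeed $\mathrm{adj}(A)=(\Tr A)\Id_2-A$ by Cayley--Hamilton, conjugation by $\lp\ba{cc}0&1\\-1&0\ea\rp$ sends $\T A$ to $\mathrm{adj}(A)$, hence $\mathrm{adj}(A)\sim\T A\sim A$, and applying this to $-A$ yields $A-(\Tr A)\Id_2\sim -A$. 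Since $\tdd$ and $\tDD$ are invariant under linear changes of coordinates, do not depend on $b$ (by \r{chco1}--\r{chco}), and shift by an additive multiple of the identity according to Remark~\ref{rem2}, it follows that $\tdd(-A,T,\mu)=\tdd(A,T,\mu)+\Tr A$ and $\tDD(-A,T,\mu)=\tDD(A,T,\mu)-\Tr A$; in particular $\tdd(A,T,\mu)=+\infty\iff\tdd(-A,T,\mu)=+\infty$, and likewise for $\tDD$. Noting that $(-A,b)$ is again controllable, it thus suffices to prove that $\tdd(A,T,\mu)=+\infty$ if and only if $\tDD(-A,T,\mu)=+\infty$.

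The first ingredient is a reformulation of these two properties. I claim that $\tdd(A,T,\mu)=+\infty$ if and only if for every $\rho>0$ there exist $K\in\R^2$, $\tau>0$ and $\theta\in(0,1)$ with $\log(1/\theta)\ge\rho\,\tau$ such that every solution of $\dot x=(A-\al b\T K)x$ with $\al\in\G(T,\mu)$ satisfies $\|x(t_0+\tau)\|\le\theta\,\|x(t_0)\|$ for all $t_0\ge0$, and that $\tDD(A,T,\mu)=+\infty$ if and only if the same statement holds with ``$\|x(t_0+\tau)\|\le\theta\,\|x(t_0)\|$'' replaced by ``$\|x(t_0+\tau)\|\ge\theta^{-1}\,\|x(t_0)\|$''. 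The ``if'' implications are routine: iterating the window estimate and controlling the solution inside a single window by $\|A-\al b\T K\|\le\|A\|+\|b\|\,\|K\|$ produces a uniform exponential decay (respectively, growth) at rate $\log(1/\theta)/\tau\ge\rho$ for all $\al\in\G(T,\mu)$, whence $\td(A,b,T,\mu,K)\ge\rho$ (respectively, $\tdD(A,b,T,\mu,K)\ge\rho$), and $\rho$ is arbitrary. The ``only if'' implications are the delicate point: they use that $\G(T,\mu)$ is weak-$\star$ compact and shift-invariant and that the solutions of \r{feedback} depend continuously on $\al$ for the weak-$\star$ topology (the continuity already invoked in the proof of Theorem~\ref{theo}, cf.\ \cite[Appendix]{MCSS}); these facts make $t\mapsto\sup_{\al\in\G(T,\mu)}\log\|\Phi_\al(t)\|$ (with $\Phi_\al$ the fundamental solution of $\dot x=(A-\al b\T K)x$) finite, continuous and subadditive, so that its asymptotic slope coincides with $\sup_{\al\in\G(T,\mu)}\lb^+(\al,K)=-\,\td(A,b,T,\mu,K)$; consequently, if $\td(A,b,T,\mu,K)>\rho$ then one obtains a genuine uniform bound $\|x(t)\|\le Ce^{-\rho(t-t_0)}\|x(t_0)\|$, from which the desired $\tau$ and $\theta=Ce^{-\rho\tau}$ are read off (and symmetrically on the divergence side). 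I expect this step --- the identification of the uniform exponential rate with $-\sup_\al\lb^+(\al,K)$ --- to be the main obstacle of the proof.

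The second ingredient is the time-reversal itself. The elementary observation is that the set of restrictions to $[0,\tau]$ of $(T,\mu)$-signals equals $\{g:[0,\tau]\to[0,1]\ \text{measurable}\ :\ \int_s^{s+T}g\ge\mu\ \text{for all }s\in[0,\tau-T]\}$ --- any such $g$ extends to a full $(T,\mu)$-signal by appending the constant $1$, the windows straddling $\tau$ being handled thanks to $\int_{\tau-T}^{\tau}g\ge\mu$ --- and that this set is stable under the reversal $g(\cdot)\mapsto g(\tau-\cdot)$. Now if $x(\cdot)$ solves $\dot x=(A-\gamma b\T K)x$ on $[0,\tau]$ then $z(s):=x(\tau-s)$ solves $\dot z=\big(-A-\gamma(\tau-\cdot)\,b\,\T{(-K)}\big)z$ on $[0,\tau]$, and the contraction $\|x(\tau)\|\le\theta\|x(0)\|$ becomes the expansion $\|z(\tau)\|\ge\theta^{-1}\|z(0)\|$ (and conversely); since the flow over $[0,\tau]$ is invertible, $\gamma\mapsto\gamma(\tau-\cdot)$ bijects $\G(T,\mu)|_{[0,\tau]}$, and $\G(T,\mu)$ is shift-invariant, the property ``uniform $\theta$-contraction over $\tau$-windows for $\dot x=(A-\al b\T K)x$, $\al\in\G(T,\mu)$'' is equivalent to ``uniform $\theta^{-1}$-expansion over $\tau$-windows for $\dot z=(-A-\al b\,\T{(-K)})z$, $\al\in\G(T,\mu)$''. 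Combining this equivalence with the reformulation above (and with the fact that $K\mapsto -K$ is a bijection of $\R^2$) yields the chain of equivalences $\tdd(A,T,\mu)=+\infty\iff\tDD(-A,T,\mu)=+\infty$, and hence, by the normalization step, $\tdd(A,T,\mu)=+\infty\iff\tDD(A,T,\mu)=+\infty$; both implications of the proposition come out at once.
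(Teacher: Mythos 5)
Your normalization step ($-A\sim A-(\Tr A)\Id_2$ in dimension two, combined with \r{chco1} and \r{ttt}), your time-reversal bookkeeping (the reversal-stability of restrictions of $(T,\mu)$-signals and the identity turning a $\theta$-contraction for $(A,K)$ into a $\theta^{-1}$-expansion for $(-A,-K)$), and the ``if'' directions of your reformulation are all correct. The problem is the ``only if'' direction of that reformulation, which you yourself flag as ``the main obstacle'' and then do not prove: you need that $\td(A,b,T,\mu,K)>\rho$, i.e.\ $\lb^+(\al,K)<-\rho$ for \emph{each individual} $\al\in\G(T,\mu)$, implies a \emph{uniform} finite-window contraction. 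Setting $f(t)=\sup_{\al}\log\|\Phi_\al(t)\|$, subadditivity and Fekete's lemma only give that the limit $\Lambda=\lim f(t)/t$ exists and satisfies $\Lambda\geq\sup_\al\lb^+(\al,K)$; the inequality you actually need, $\Lambda\leq\sup_\al\lb^+(\al,K)$, does not follow from subadditivity. A priori there could be signals $\al_n$ and times $t_n\to\infty$ realizing growth close to $\Lambda t_n$ without any single signal in $\G(T,\mu)$ realizing asymptotic growth $\Lambda$; ruling this out requires either an ergodic-theoretic variational principle for linear cocycles over the compact shift-invariant base $\G(T,\mu)$, or a concatenation/closing argument that glues the near-extremal finite-time pieces into one admissible signal while matching the extremal directions in projective space.

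That closing argument is precisely where the paper's proof does its work: it periodizes a near-extremal trajectory by steering the projectivized system \r{unit0} around the circle with a control satisfying the PE constraint, and the authors explicitly note (Open Problem~\ref{POLE}) that the whole proof hinges on this finite-time controllability on the circle. So the difficulty has not been removed by your scheme; it has been relocated into an unproven lemma which, in dimension two, is essentially equivalent to the construction you were trying to avoid. If you want to pursue your route, you must either prove the identification $\Lambda=\sup_\al\lb^+(\al,K)$ (e.g.\ via the variational principle for subadditive cocycles over compact metrizable flows, checking that the shift acts continuously on $\G(T,\mu)$ for the weak-$\star$ topology and that $\al\mapsto\Phi_\al(t)$ is continuous) or supply the projective closing construction directly. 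As it stands, the proposal is incomplete at its central step.
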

\begin{proof}
According to \rref{chco1}, \rref{chco} and \rref{ttt}, it is enough to prove the result
for $(A,b)$ in companion form and with $\Tr(A)=0$. Let then
\be\label{comp-form}
A=\lp \ba{cc}0&1\\a&0\ea\rp\ \ \ \ Êb=\lp\ba{c}0 \\1\ea\rp,
\ee
with $a\in\R$.

Assume that $\tdd(A,T,\mu)=+\infty$.
By definition, for every $C>0$ there exists $K\in\R^2$ such that $\td(A,b,T,\mu,k)>C$.
Therefore, by definition of $\td$,
\be\label{oneway}
\limsup_{t\to+\infty} \frac{\log(\|x(t;0,x_0,K,\al)\|)}t<-C,\ \ \ \ \ \forall \al\in\G(T,\mu), \forall \|x_0\|=1.
\ee
Moreover, due to \r{td-vp},
for $C$ large enough we can assume that
$k_1,k_2$ and $k_1/k_2$ are large positive
numbers.

Let $K_-=(k_1,-k_2)$. We claim that if $C$ is large enough then
$\tDD(A,b,T,\mu,K_-)\geq C$. Assume by contradiction that there
exists $\bar \al\in \G(T,\mu)$ such that $\lb^-(\bar \al,K_-)<C$.
Then there exists $\bar x\in\R^2$ of norm one and an increasing
sequence $(t_n)_{n\in\N}$ of positive times going to infinity such
that
$$
\frac{\log(\|x(t_n;0,\bar x,K_-,\bar \al)\|)}{t_n}<C,\ \ \
\forall\in\N.
$$
Notice that for
every $t\in [0,t_n]$,
$$
x(t;0,\bar x,K_-,\bar \al(\cdot))=\diag(1,-1)x(t_n-t;0,x_n,K,\bar \al(t_n-\cdot)),
$$
where $x_n=\diag(1,-1)x(t_n;0,\bar x,K_-,\bar\al)$.

Therefore, by homogeneity,
\be\label{alg--}
\frac{\log\left(\left\|x\left(t_n;0,\frac{x_n}{\|x_n\|},K,\bar
\al(t_n-\cdot)\right)\right\|\right)}{t_n}=-\frac{\log(\|x_n\|)}{t_n}=
-\frac{\log(\|x(t_n;0,\bar x,K_-,\bar \al)\|)}{t_n}>-C.
\ee

This would contradict \r{oneway} if, for some positive integer $n$,
$x_n/\|x_n\|=\bar x$ and the signal obtained by repeating
$\bar\al|_{[0,t_n)}$ by periodicity over $\Rp$ belonged to
$\G(T,\mu)$.
Indeed, in such a case,
\be\label{kvolte}%$$
\frac{\log\left(\left\|x\left(k t_n;0,\bar x,K,\tilde
\al(\cdot)\right)\right\|\right)}{k t_n}
>-C
\ee%$$
for every $k\geq 1$, where $\tilde \al\in \G(T,\mu)$ denotes the
signal obtained by repeating $\bar\al|_{[0,t_n)}(t_n-\cdot)$ by
periodicity over $\Rp$.

In order to recover the periodic case, we are going to extend $\bar
\al$ backwards in time over an interval $[-2\mu-\tau_n,0)$ as
follows. First set $A_1^-=A-bK_-^T$. We take
$\bar\al=1$ on the intervals $[-\mu,0)$ and
$[-2\mu-\tau_n,-\mu-\tau-n)$ and we extend $\bar\al$ on
$[-\mu-\tau_n,-\mu)$ in such a way that the trajectory corresponding
to $\bar \al|_{[-\mu-\tau_n,-\mu)}$ and to the gain $K_-$ connects the half-line $\Rp
x_n^+$ to $\bar x^-$, where $x_n^+=\exp(\mu A_1^-)\diag(1,-1)x_n$ and $\bar
x^-=\exp(-\mu A_1^-)\bar x$.
We show below that this can be done fulfilling  the PE condition and with
$\tau_n$ upper bounded 
by a  constant independent of $n$.
Hence,
the signal
obtained extending  $\bar \al_{[-2\mu-\tau_n,t_n]}$ by periodicity
belongs to
$\G(T,\mu)$ and we have
\brs
x\left(t_n+2\mu+\tau_n;0,x_n,K,\bar
\al(t_n+2\mu+\tau_n-\cdot)\right)&\in&\Rp x_n\\
\log\left(\left\|x\left(t_n+2\mu+\tau_n;0,\frac{x_n}{\|x_n\|},K,\bar
\al(t_n+2\mu+\tau_n-\cdot)\right)\right\|\right)&=&
\log\left(\|\tilde{x}\|\right) -
\log(\|x(t_n;0,\bar x,K_-,\bar \al)\|),
\ers
where
$\tilde{x}=x(\tau_n+2\mu;0,\diag(1,-1)\bar x,K,\bar\al|_{[-2\mu-\tau_n,0]}(-\,\cdot))$.
Note that $\log(\|\tilde{x}\|)$ can be
lower bounded independently of $n$, because of the uniform boundedness of $\tau_n$.
Therefore,
\brs
\frac{\log\left(\left\|x\left(t_n+2\mu+\tau_n;0,x_n,K,\bar
\al(t_n+2\mu+\tau_n-\cdot)\right)\right\|\right)}{t_n+2\mu+\tau_n}&>&
\frac{\log\left(\|\tilde{x}\|\right)}{t_n+2\mu+\tau_n} -\frac{C t_n}
{t_n+2\mu+\tau_n}
\ers
is larger than $-C$ for $n$ large enough and we can conclude as in \r{kvolte}.

We are left to prove that the control system on the unit circle
whose admissible velocities are %given by
the projections of the
linear vector fields $x\mt (A-\xi b K_-^T)x$, $\xi\in [0,1]$, is
completely controllable in finite time by controls $\xi=\xi(t)$ satisfying the
PE condition. Notice that the equilibria of the projection of a linear vector field $x\mt A'x$
on the unit circle %correspond to
are given by
the eigenvalues of $A'$. 
All other trajectories are heteroclinic connections between the equilibria, unless the eigenvalues of $A'$ are non-real, in which case the phase portrait is given
 by a single periodic trajectory.

Denote by $\theta$ a point on the unit circle, identified with $\R/2\pi\Z$. Then, the above
mentioned control system on the unit circle can be written
\be\label{unit0}
\dot\tht=a\cos^2(\tht)-\sin^2(\tht)+\xi\cos(\tht)\left(
k_2\sin(\tht)-k_1\cos(\tht)\right),\ \ \xi\in[0,1].
\ee

We prove the controllability of~\rref{unit0}
by exhibiting a trajectory $\bar\tht$ of
\rref{unit0} corresponding to a  PE control $\bar\xi$, starting at some $\tht_0\in \R/2\pi\Z$, making a complete turn and
going back in finite time to $\tht_0$.

%We next build an input function $\bar\xi$ giving rise to such
%a trajectory $\bar\tht$.
The PE condition will be verified by checking that
the control $\bar\xi=0$ is applied for a total time that is
smaller than $T-\mu$.
Define the angle $\tht_K\in (0,\pi/2)$ by
$$\tan\lp \tht_K\rp=2\frac{k_2}{k_1}.$$
Notice that
the eigenvectors of $A_1^-$ are proportional to the vectors
$(2,k_2\pm \sqrt{k_2^2-4(k_1-a)})$. Therefore, assuming that $k_1$ is larger than $a$, the
angle  between any real eigenvector of $A_1^-$ and the vertical axis is smaller than
$\th_K$.

Take
$\th_0=\pi/2$
and apply
$\bar\xi= 0$ until
$\bar\tht$ reaches $\pi/2-\th_K$.
Since $k_2/k_1$ is small and $\th_K$ is of the same order as $k_2/k_1$, then we can assume that $a\cos^2(\tht)-\sin^2(\tht)<-1/2$ for
$\th\in [\pi/2-\tht_K,\pi/2]$. Therefore, the time needed to go from $\pi/2$ to $\pi/2-\tht_K$ can be assumed to be smaller than $(T-\mu)/2$.
When the trajectory $\bar\tht$ reaches $\pi/2-\th_K$, switch to $\bar\xi= 1$
and apply it
until
$\bar\tht$ reaches (in finite time) $-\pi/2$.
This is possible since either
the eigenvectors of $A_1^-$ are non-real
or 
they
are contained in the cone
$$\{(r\cos\th,r\sin\th)\mid r>0,\;\th\in (\pi/2-\th_K+m\pi, \pi/2+m\pi),\;m\in\Z\}.$$
In both cases the dynamics of \r{unit0} with $\xi=1$
describe a non-singular clockwise rotation on the
arc of the
unit circle corresponding to $[\pi/2,\pi/2-\th_K]$.
The trajectory is completed, by homogeneity,
taking $\bar\xi= 0$ until
$\bar\tht$ reaches $-\pi/2-\th_K$ and finally $\bar\xi= 1$
until $\bar\tht$ reaches $-3\pi/2=\pi/2\mbox{ $($mod }2\pi)$.
As required,
the sum of the lengths of the intervals on which $\bar\xi=0$ does not exceed
$T-\mu$.

This concludes the proof that $\tdd(A,T,\mu)=+\infty$ implies
$\tDD(A,T,\mu)=+\infty$.
The converse 
can be proven by a perfectly analogous argument.
\end{proof}

\subsection{Arbitrary rates of convergence and divergence for $\rho$ large enough}
This section aims at
proving
that for $\rho$ large enough a persistently
excited system can be either stabilized
with an
arbitrarily large rate of exponential convergence or
destabilized with an
arbitrarily large rate of exponential divergence. This will be done by adapting the classical
high-gain technique.

\begin{prop}\label{rho^*}
Let $n$ be a positive integer. There exists $\rho^*\in(0,1)$ 
%(only depending on $n$) 
such that for every controllable pair $(A,b)\in
M_n(\R)\times \R^n$, every $T>0$ and every $\rho\in(\rho^*,1]$ one has
$\tdd(A,T,\rho T)=\tDD(A,T,\rho T)=+\infty$.
\end{prop}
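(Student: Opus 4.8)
The plan is to adapt the classical high-gain pole-placement construction, using as a Lyapunov function a quadratic form that is rescaled together with the gain. First I would reduce to the case in which $(A,b)$ is in companion form, $A=J_n+b\,a^T$ with $b=(0,\dots,0,1)^T$ and $a\in\R^n$: this is legitimate because, by \rref{chco1}--\rref{chco} and the state-equivalence of controllable single-input systems with the same spectrum recalled in the proof of Theorem~\ref{thm2}, the quantities $\tdd(A,T,\mu)$ and $\tDD(A,T,\mu)$ are unchanged, and they do not depend on $b$. It then suffices, for every $C>0$, to produce a gain $K$ with $\td(A,b,T,\mu,K)>C$; the divergence statement $\tDD(A,T,\rho T)=+\infty$ is obtained by the mirror construction, placing the closed-loop poles at $+\nu$ instead of $-\nu$ and reversing all the inequalities below, which produces another dimension-dependent threshold, and one takes $\rho^*$ to dominate both (and, say, $\rho^*\ge\tfrac12$, which only matters to keep $\rho^*>0$ when $n=1$).

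Next I would fix, once and for all, $M:=J_n-b\,P^T$ with $P=(\binom{n}{0},\dots,\binom{n}{n-1})$; this $M$ depends only on $n$, has characteristic polynomial $(t+1)^n$, and is Hurwitz, so there is a symmetric positive definite $S_0$ with $M^TS_0+S_0M=-\Id_n$. For $\nu\ge1$ I would take $K=K_\nu$ to be the gain for which $A-bK^T=\nu\,D_{n,\nu}^{-1}M\,D_{n,\nu}$; by \rref{elementary} this is exactly the gain placing all the eigenvalues of $A-bK^T$ at $-\nu$, and $K_\nu$ exists and is unique. The Lyapunov function would be $V_\nu(x)=x^TS_\nu x$ with $S_\nu:=D_{n,\nu}S_0D_{n,\nu}$.

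The key estimate is as follows. Since $A-\alpha bK^T$ is affine in $\alpha$, equal to $A$ at $\alpha=0$ and to $A-bK^T$ at $\alpha=1$, along any solution of $\dot x=(A-\alpha(t)bK^T)x$ one has, for a.e. $t$, $\frac{d}{dt}V_\nu(x(t))=(1-\alpha(t))\,x^T(A^TS_\nu+S_\nu A)x+\alpha(t)\,x^T\big((A-bK^T)^TS_\nu+S_\nu(A-bK^T)\big)x$. Conjugating by $D_{n,\nu}$ and using \rref{elementary}, one obtains $(A-bK^T)^TS_\nu+S_\nu(A-bK^T)=\nu D_{n,\nu}(M^TS_0+S_0M)D_{n,\nu}=-\nu D_{n,\nu}^2\le -c_3\,\nu\,S_\nu$ with $c_3:=1/\lambda_{\max}(S_0)>0$; and, since $D_{n,\nu}AD_{n,\nu}^{-1}=\nu J_n+b\,(a^TD_{n,\nu}^{-1})$ with $\|a^TD_{n,\nu}^{-1}\|\le\|a\|$ for $\nu\ge1$, one also obtains $A^TS_\nu+S_\nu A\le\nu\,C_2(\nu)\,S_\nu$ with $C_2(\nu)\to C_2:=\|J_n^TS_0+S_0J_n\|/\lambda_{\min}(S_0)$ as $\nu\to\infty$. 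Crucially $c_3$ and $C_2$ depend only on $n$. Hence $\frac{d}{dt}V_\nu(x(t))\le\nu\big(C_2(\nu)-\alpha(t)(C_2(\nu)+c_3)\big)V_\nu(x(t))$, and integrating, using the persistent-excitation bound $\int_0^t\alpha(s)ds\ge(t/T-1)\mu$ from \rref{EP} together with $C_2(\nu)+c_3>0$, gives $V_\nu(x(t))\le e^{\nu(C_2(\nu)+c_3)\mu}\,e^{\nu(C_2(\nu)-(C_2(\nu)+c_3)\rho)t}\,V_\nu(x(0))$, where $\rho:=\mu/T$. Taking $\rho^*>C_2/(C_2+c_3)$ (and above the divergence threshold, and above $\tfrac12$), so that $\rho^*\in(0,1)$ and depends only on $n$, one sees that for $\rho>\rho^*$ and $\nu$ large enough (using $C_2(\nu)\to C_2$ and the monotonicity of $s\mapsto s/(s+c_3)$) the exponent $C_2(\nu)-(C_2(\nu)+c_3)\rho$ is negative, so $V_\nu$ decays at rate $\tfrac{\nu}{2}\big((C_2(\nu)+c_3)\rho-C_2(\nu)\big)\to+\infty$; as the $\nu$-dependent prefactor and the $e^{\nu(\cdots)\mu}$ factor do not affect Lyapunov exponents, $\td(A,b,T,\mu,K_\nu)\ge\tfrac{\nu}{2}\big((C_2(\nu)+c_3)\rho-C_2(\nu)\big)>C$ for $\nu$ large. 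Since $C$ is arbitrary, $\tdd(A,T,\rho T)=+\infty$, and symmetrically $\tDD(A,T,\rho T)=+\infty$.

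The step I expect to be the main obstacle — and the one the whole construction is designed to overcome — is the treatment of the time set where $\alpha(t)=0$: there the dynamics is the uncontrolled $\dot x=Ax$, which may be unstable and which no choice of $K$ can influence, so a naive bound would lose control of the trajectory. What makes the argument go through is that, in the weighted norm $V_\nu$ (equivalently, after conjugation by $D_{n,\nu}$), the matrix $A$ is $\nu J_n+O(1)$ with $J_n$ \emph{nilpotent}, so this uncontrolled growth has rate only $O(\nu)$ — the same order of magnitude as the decay rate $\Theta(\nu)$ furnished by the high gain when $\alpha=1$ — and then the persistent-excitation lower bound on $\int\alpha$ (essentially $\rho t$) tips the balance, in the stabilizing direction exactly when $\rho$ exceeds the dimension-dependent threshold $\rho^*$.
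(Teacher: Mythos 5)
Your proof is correct and follows essentially the same high-gain argument as the paper: a gain of the form $\nu D_{n,\nu}(\cdot)$ together with the rescaled Lyapunov function $x^T D_{n,\nu} S_0 D_{n,\nu} x$, under which the closed-loop derivative is bounded by $\nu\big(C_2-\alpha(C_2+c_3)\big)V$, and the persistent-excitation inequality yields uniform decay once $\rho$ exceeds a threshold depending only on $n$. The only cosmetic difference is that the paper scales a fixed Hurwitz gain $K$ for $J_n$ (treating $A-J_n$ as an $O(1/\gamma^2)$ perturbation after rescaling) rather than placing all closed-loop poles exactly at $-\nu$.
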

\begin{proof}
Fix $T>0$ and let $(A,b)\in M_n(\R)\times \R^n$ be a controllable pair
in companion form. According to \r{ttt}, it is enough to establish
the result with the extra hypothesis that $\Tr(A)=0$. We therefore
assume in the sequel that $b=(0,\dots,0,1)^T$, $A=J_n+b K_A^T$  and
$K_A^T b=0$.

We first prove the stabilization result. Fix $K\in\R^n$ such that $
J_n-b K^T$ is Hurwitz. Let $P$ be the unique positive definite $n\times n$
matrix that solves the Lyapunov equation
$$ (J_n-b K^T)^TP+P (J_n-b K^T)=-\Id_n.$$
Define $V(x)=x^T P x$.
Then,
for every $\al\in L^\infty(\R,[0,1])$ and every solution
of
$\dot x=(J_n-\alpha b K^T)x$,
one has
$$\frac{d}{dt}V(x(t))\leq -C_1 V(x(t))+ C_2(1-\al(t))V(x(t)),$$
with $C_1,C_2$ two positive constants only depending  on $K$.
Choose $\rho\in (0,1)$ and assume that $\al$ is a $(T,T\rho)$-signal.
Then, for every $t\geq0$,
$$V(x(t+T))\leq V(x(t))\exp(-T(C_1-C_2(1-\rho))).$$
Therefore, if $\rho>1-(C_1/2C_2)$ then
$\tdd(J_n,T,T\rho)\geq C_1/2>0$.
For every $\gamma>0$, set  $K_\ga=\ga D_\ga K$ (where, as in the previous section,
$D_\gamma=\diag(\gamma^{n-1},\dots,\gamma, 1)$).
Recall that $J_n$ and $D_\ga$
satisfy \r{elementary}.
Take a solution 
of $\dot x=(A-\alpha b K_\ga^T)x$ with
$\al\in\G(T,\rho T)$.   Set $z(\cdot)=D_\ga x(\cdot)$ and notice that for
every $\ga>1$
$$\frac{d}{dt}V(z(t))\leq \ga(-C_1 + C_2(1-\al(t))+C_A/\ga^2)V(z(t)),$$
where $C_A$ only depends on $K_A$ and $P$. Then clearly
$\tdd(A,T,T\rho)\geq \ga C_1/3$ for $\rho>1-(C_1/2C_2)$ and $\ga$
large enough. Thus, $\tdd(A,T,T\rho)=+\infty$ and one can choose
$\rho^*\geq 1-(C_1/2C_2)$.

The destabilization result can be obtained by a similar argument
based on the Lyapunov equation
$$ (J_n-b L^T)^TQ+Q (J_n-b L^T)=\Id_n,$$
verified for some $L\in\R^n$ and some symmetric positive definite matrix
$Q$.
\end{proof}

\subsection{Finite maximal rate of convergence for $\rho$ small enough}
In this section we restrict our attention to the case $n=2$.

\begin{prop}\label{rho_*}
 There exists $\rho_*\in(0,1)$  such that for every controllable pair
 $(A,b)\in M_2(\R)\times \R^2$,  every $T>0$ and every $\rho\in(0,\rho_*)$ one has
 $\tdd(A,T,\rho T)<+\infty$.
\end{prop}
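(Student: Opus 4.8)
By \eqref{chco1} and \eqref{ttt} it suffices to treat $(A,b)$ in companion form with $\Tr A=0$, say $A=\begin{pmatrix}0&1\\a&0\end{pmatrix}$, $b=\T{(0,1)}$, and by \eqref{39+} we may rescale time so that $T=1$, $\mu=\rho$. Write $K=\T{(k_1,k_2)}$. If $A-\bar\alpha b\T K$ is not Hurwitz for some $\bar\alpha\in[\rho,1]$, then $\td(A,b,1,\rho,K)\le0$ already by \eqref{td-vp}; hence, apart from that trivial case, we may assume $k_2>0$, $k_1>\max(a,a/\rho)$, and --- the only regime requiring work --- $\|K\|$ arbitrarily large. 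The goal is then to produce, for each such $K$, a signal $\alpha_K\in\G(1,\rho)$ and an initial condition such that $\limsup_{t\to\infty}t^{-1}\log\|x(t;0,x_0,K,\alpha_K)\|\ge-c_0$ for a constant $c_0=c_0(a,\rho)$ independent of $K$; this yields $\td(A,b,1,\rho,K)\le c_0$ for every $K$, hence $\tdd(A,1,\rho)=\tdd(A,T,\rho T)\le c_0<+\infty$.

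\textbf{Circle dynamics.} Passing to polar-type coordinates $x=re_\theta$, $e_\theta=\T{(\cos\theta,\sin\theta)}$, the closed loop becomes
\[
\dot\theta=a\cos^2\theta-\sin^2\theta-\alpha\cos\theta\,P(\theta),\qquad \frac{d}{dt}\log r=\tfrac{1+a}{2}\sin2\theta-\alpha\sin\theta\,P(\theta),
\]
with $P(\theta)=\T K e_\theta=k_1\cos\theta+k_2\sin\theta$; and since $\dot x_1=x_2$ one has the exact identity $\frac{d}{dt}\log|x_1|=\tan\theta$, whence $\frac{d}{dt}\log\|x\|=\tan\theta-\frac{d}{dt}\log|\cos\theta|$, so that over any interval $[t_1,t_2]$
\[
\log\|x(t_2)\|-\log\|x(t_1)\|=\int_{t_1}^{t_2}\tan\theta(s)\,ds+\log\frac{|\cos\theta(t_1)|}{|\cos\theta(t_2)|}.
\]
When $\alpha\equiv0$ the $\theta$-dynamics is that of the free flow $e^{tA}$: for $a>0$ it has the attracting equilibrium $\theta_\ast=\arctan\sqrt a$ (the unstable eigendirection of $A$), at which $\frac{d}{dt}\log r=\sqrt a>0$; for $a=0$ it converges to $\theta=0$, at which $\frac{d}{dt}\log r=0$; for $a<0$ it is a rotation with $e^{2\pi A/\sqrt{|a|}}=\Id$, so the radial variation over any interval of length $\le1$ is bounded by a constant depending only on $a$. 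In every case a ``coasting'' phase with $\alpha\equiv0$ over any interval of length $\le1$ changes $\log\|x\|$ by at least $-c_1(a)$ (and, for $a>0$, after the transient it increases it at rate $\sqrt a$) and keeps $\theta$ in a fixed compact ``good set'' $\mathcal G\subset\R/2\pi\Z$ bounded away from $\pm\pi/2$ once the free-flow transient has passed.

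\textbf{Loading phases and the obstacle.} The signal $\alpha_K$ is built by alternating coasting phases ($\alpha\equiv0$) with short ``loading'' phases on which $\alpha$ is raised to $1$ just enough to guarantee $\int_t^{t+1}\alpha_K\ge\rho$ for all $t$; since $\rho<\rho_\ast$ is small, loading phases of total length $O(\rho)$ per unit time suffice, while coasting already contributes a nonnegative amount (indeed $\ge\sqrt a$ times its length when $a>0$). On a loading phase the feedback term dominates $\dot\theta\approx-\cos\theta\,P(\theta)$ and sweeps $\theta$ monotonically from its entry value $\theta_{\mathrm{in}}\in\mathcal G$ toward the attracting slow eigendirection $\theta_s(K)$ of $A-b\T K$, which sits near $-\pi/2$ with $\tan\theta_s(K)\approx\lambda_{\mathrm{slow}}(K)$ of large modulus; by the displayed formula the radial effect of loading is $\int\tan\theta\,ds+\log(|\cos\theta_{\mathrm{in}}|/|\cos\theta_{\mathrm{out}}|)$. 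The main difficulty --- the step I expect to require the most care --- is to schedule the loading phases, using the slack afforded by $\rho$ being small, so that this quantity stays bounded \emph{uniformly in $K$}: a naive schedule lets $\theta$ dwell near $\theta_s(K)$ for a time of order $\rho$, which costs $\int\tan\theta\,ds\approx\rho\,\lambda_{\mathrm{slow}}(K)$, a loss that blows up with $\|K\|$; the remedy is to terminate each loading phase while $\theta$ is still a fixed distance from $\theta_s(K)$ and to interleave coasting phases long enough (this is where $\rho_\ast<1$ enters) for the free flow to carry $\theta$ back into $\mathcal G$ before the next loading phase, so that each loading phase costs only $O(1)$ and the intervening coasting phases dominate. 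Granting such a schedule, over every interval of length $1$ one gets $\log\|x(t+1)\|\ge\log\|x(t)\|-c_0$ with $c_0=c_0(a,\rho)$ independent of $K$, hence $\limsup_{t\to\infty}t^{-1}\log\|x(t)\|\ge-c_0$ along this trajectory, and the conclusion follows as in the first paragraph; a single $\rho_\ast\in(0,1)$ works for all $a\in\R$ after the normalization $T=1$.
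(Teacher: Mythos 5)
Your reductions and the overall logical frame (bound $\td(A,b,1,\rho,K)$ above by a constant independent of $K$ by exhibiting, for each $K$, a signal in $\G(1,\rho)$ along which the trajectory does not decay too fast) are sound, and the polar-coordinate identity is correct. But there is a genuine gap exactly where you flag it: the existence of the loading/coasting schedule \emph{is} the content of the proposition, and ``granting such a schedule'' cannot be granted. Worse, the remedy you sketch appears incompatible with the persistent-excitation constraint in the high-gain regime. If each loading phase must end while $\theta$ is still a fixed angular distance from the attracting eigendirection, then (since $|\dot\theta|$ is of order $\|K\|$ away from that direction) each loading phase lasts a time $O(1/\|K\|)$; on the other hand each intervening coasting phase must last a time bounded below independently of $K$, because the free flow $e^{tA}$ has angular speed bounded by a constant depending only on $a$ and must carry $\theta$ a fixed distance back into $\mathcal G$. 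Hence the fraction of time on which $\alpha=1$ is $O(1/\|K\|)$ and $\int_t^{t+1}\alpha\,ds\geq\rho$ fails for $\|K\|$ large. Your schedule also does not cover gains for which $A-b\T{K}$ has complex eigenvalues with large negative real part: there is then no attracting eigendirection and $\theta$ sweeps through $-\pi/2$, so the ``stop before $\theta_s(K)$'' device is meaningless there, yet these gains must also be excluded.

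The paper resolves the difficulty differently. It parametrizes gains as $K_{\th,\ga}=\ga D_\ga e_\th$ and, after the rescaling $y(\cdot)=D_\ga x(\cdot/\ga)$, reduces to the normalized system $\dot y=(J_2+a\ga^{-2}H)y-\al b e_\th^T y$ with $\al\in\G(\ga,\rho\ga)$; it then constructs, for each $\th$ and all $\ga$ above an explicit threshold $\ga(\th)$, a periodic bang-bang signal on a window of length $\tau$ whose $\al=1$ sub-intervals occupy a fraction at least $\bar\rho$ of the window and which maps a ray to itself with expansion factor $\xi>1$ --- a quantified overshoot, computed explicitly from the eigenvalue bounds \r{borni} with constants $M,\bar\rho,h$ chosen independently of $a$ and $\th$ (and with a separate, geometrically different construction for $\th$ bounded away from $0$, covering the complex-eigenvalue regime). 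Periodic extension and point~\ref{rho2} of Lemma~\ref{proprieta} then place the signal in the required class. Crucially, the $\al=1$ phases are \emph{not} short: they last a fixed multiple of $1/|\mu_+|$, and destabilization comes from the net expansion of one full cycle rather than from making each loading phase individually harmless; gains below the threshold $\ga(\th)$ are handled by the eigenvalue bound \r{td-vp} over the region $\Om$. To complete your argument you would need to replace the ``fixed distance from $\theta_s$'' rule by a one-cycle expansion estimate of this type, uniform in $K$ and in $a$, and to treat the complex-eigenvalue regime separately.
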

\begin{proof}
Thanks to Remark~\ref{rem2}, it suffices to show that
 there exists $\rho_*\in(0,1)$  such that, for every controllable
 pair $(A,b)\in M_2(\R)\times \R^2$ with $\Tr(A)=0$, one has
 $\tdd(A,1,\rho_*)<+\infty$.

As in \r{comp-form}, take $(A,b)$ in companion form, ie,
$$A=J_2+a H,\  \  \ Êb=(0,1)^T,$$
with $a\in\R$ and $H=\lp \ba{cc}0&0\\1&0\ea\rp$.

For $\theta\in [-\pi,\pi)$ set $e_\th=(\sin\th,\cos\th)^T$ and define $y_0=(-1, \ 0)^T$.
Every gain can be written as
$$K_{\th,\ga}=\ga D_\ga e_\th,$$
with $\ga\geq 0$ and $\theta\in [-\pi,\pi)$.

Moreover, if  $A-b K^T$ is Hurwitz with $K=\ga D_\ga e_\th$
   then the sum and the product of its two eigenvalues are, respectively,
   $\ga\cos\th>0$ and $\ga^2 \sin\th-a>0$. In particular,
   $\th\in (-\pi/2,\pi/2)$ and $\ga^2 \sin\th>a$.
If $\th\in(-\pi/2,0]$ with $A-b K^T$ Hurwitz, then $|a-\sin\th
\ga^2|\leq |a|=-a$ and therefore the convergence rate of $A-b K^T$
is upper bounded by a constant only depending on $a$.

Let $\Omega_0=(0,\pi/2)\times (0,\infty)$. We show in the following
the existence of $\rho>0$ and 
$\Omega =\{(\th,\gamma)\mid 0<\th<\pi/2,\;0<\ga<\ga(\th)\}\subset \Omega_0 $ such that
\be\label{(1)} \mbox{ if $(\theta,\ga)\in \Om_0$ and $K_{\th,\ga}$
is a $(1,\rho)$-stabilizer of $\dot x=Ax+\al bu$, then
$(\th,\ga)\in\Omega,$ } \ee and \be\label{(2)}
\sup_{(\th,\ga)\in\Omega} \min\{-\Re(\sigma(A-b
K_{\th,\ga}^T))\}<+\infty, \ee and the conclusion then follows
from
\r{td-vp}.

%
%The proof consists of determining
%$\rho>0$ and
%a function $\ga(\theta)$, defined on $(0,\pi/2)$, such that,
%for every gain $K_{\th,\ga}$
%with $\ga>\ga(\th)$, there exists a $(1,\rho)$-signal $\al$
%and a trajectory of $\dot x=A x-\al b K_{\th,\ga}^T$ escaping to infinity.
%The set $\Omega$ is then defined  as
%$$\Om=\{(\th,\gamma)\mid 0<\th<\pi/2,\;0<\ga<\ga(\th)\},$$
%and one finally checks condition \r{(2)}.

Fix $\theta\in (0,\pi/2)$. In order to find, for $\ga$ large enough, $\al\in{\G} ( 1,\rho)$  and $x_0\in \R^2$ 
such that the trajectory of 
$$\dot x=Ax-\al b K_{\th,\ga}x,\   \   \  x(0)=x_0,$$
is unbounded, we
apply the transformation $y_\ga(\cdot)=D_\ga x(\cdot/\ga)$: the
problem is now to find, for  $\ga$ large enough, 
$\al\in\G(\ga,\rho\ga)$ and an unbounded trajectory of 
\be\label{ccc} \dot
y=\lp J_2+\frac a{\ga^2} H\rp y-\al b e_{\th}y. 
\ee

Due to the homogeneity of the system, the latter fact reduces to
determine $\tau$ large enough and $\al\in\G(\tau,2\rho\tau)$ such
that the solution $y(\cdot\,;0,y_0,e_\th, \al)$ of \r{ccc} satisfies
$y(\tau;0,y_0, \al)=-\xi y_0$ with $\xi>1$.  Indeed, for every
$\ga>\tau$ the extension of $\al|_{[0,\tau)}$ by periodicity is a
$(\ga,\rho\ga)$-signal (see point~\ref{rho2} in
Lemma~\ref{proprieta}) and the sequence $\|y(m\tau;0,y_0,
\al)\|=\xi^m$ goes to infinity as $m$ goes to infinity.

Set
$$M_\th=J_2-b e_\th^T,\ \ \  \    N_{a,\th,\ga}=J_2+\frac a{\ga^2} H-b e_\th^T.$$

Consider $h>0$ small to be fixed later. We distinguish two cases
depending on whether $\th\in (0,h)$ or not.

\medskip

{\bf The case $\th\in [h,\pi/2)$.}

\medskip

We construct a PE signal $\al$ as follows: starting at $y_0$ take
 $\al=1$ until the trajectory $y(\cdot\,;0,y_0,e_\th,\al)$ of
 \r{ccc}
  reaches, at time $T_1$,
 the switching line $\sin(\th) x+\cos(\th) y=0$.
In order to ensure that the switching line is reached in finite time
and, moreover,  that $T_1$ is lower and upper bounded by two
positive constants
  only depending on $h$ (and not on $\th\in[h,\pi/2)$),
it suffices  to choose  $\ga>\Ga_1(a,h)>0$ with $\Ga_1(a,h)$ only
depending on $a$ and $h$. (Indeed, the bounds hold for all matrices
in a neighborhood of $\{M_\th\mid \th\in [h,\pi/2)\}$ and it
suffices to ensure that $N_{a,\th,\ga}$ belongs to such
neighborhood.)

From  $y(T_1;0,y_0,e_\th,\al)$
  set $\al=0$ until the first coordinate of $y(\cdot\,;0,y_0,e_\th,\al)$ takes,
at time $T_1+T_2$, the value $1$.
    Finally,
 take $\al=1$
 until the second coordinate of $y(\cdot\,;0,y_0,e_\th,\al)$ reaches, at time
$T_1+T_2+T_3$, the value $0$.
(See Figure~\ref{miod}.)
\begin{figure}[h!]
\begin{center}
\input{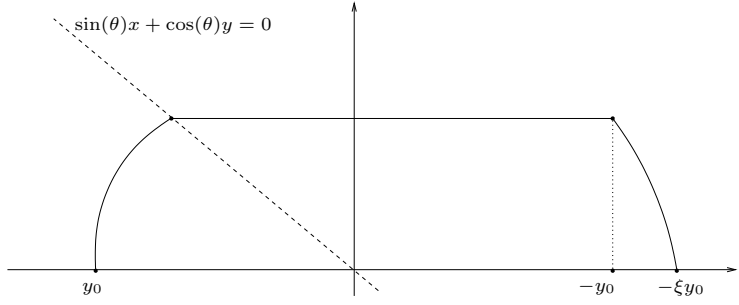}
\caption{\label{miod}The trajectory $y(\cdot\,;0,y_0,e_\th,\al)$ when
$\th\in [h,\pi/2)$}
\end{center}
\end{figure}

Analogously to what happens for $T_1$, the values $T_2$ and $T_3$ admit
lower and upper positive bounds
  only depending on $h$.

Define
$\tau=T_1+T_2+T_3$ and
notice that it admits an upper bound $\Tau_1(h)$ only depending on $h$. Finally,
$\frac{T_1+T_3}{T_1+T_2+T_3}$
admits a lower bound $\rho_1$ only depending on $h$.
The construction of the required $(\tau,\rho_1 \tau)$-signal
is achieved and we set
\be\label{gth1} \ga(\theta)\equiv \max(\Ga_1(a,h),\Tau_1(h)). \ee

\medskip

{\bf The case $\th\in (0,h)$.}

\medskip

Notice that the condition for $N_{a,\th,\ga}$ to be Hurwitz is that
$\ga^2>|a|/\sin\th$. Choose $\ga>\Ga_2(a,\th)=M\sqrt{|a|} /\sin\th$
with $M$ large (to be fixed later independently of all parameters).
In particular, for $M$ large enough and $h_0>0$  small enough
(independent of all parameters), for every  $\th\in(0,h_0)$ and
every $\ga>\Ga_2(a,\th)$ the matrix $N_{a,\th,\ga}$  has two real
eigenvalues, denoted by $\mu_+(a,\th,\ga)>\mu_-(a,\th,\ga)$ and
\be\label{borni} -2<\mu_-(a,\th,\ga)<-1/2,\ \ \
-2\sin\th<\mu_+(a,\th,\ga)<-\sin\th/2. \ee From now on we assume
$h\in(0,h_0)$.

Similarly to what has been done above, we construct a PE signal
$\al$ as follows: starting at $y_0$ take
 $\al=1$ in  \r{ccc}
for a time $T_1=\bar\rho M/|\mu_+(a,\th,\ga)|$ with $\bar\rho\in(0,1)$
to be fixed later.
Set $y_1=y(T_1;0,y_0,e_\th,\al)$.

 From $y_1$
set $\al=0$ for a time $T_2=M/|\mu_+(a,\th,\ga)|$ and denote by $y_2$
the point $y(T_1+T_2;0,y_0,e_\th,\al)$.    Finally,
 take $\al=1$
 until the second coordinate of $y(\cdot\,;0,y_0,e_\th,\al)$ assumes, at
 time $T_1+T_2+T_3$, the value $0$. (See Figure~\ref{caso2}.)

\begin{figure}[h!]
\begin{center}
\input{caso2bis.pstex_t}
\caption{\label{caso2}The trajectory $y(\cdot\,;0,y_0,e_\th,\al)$ when
$\th\in(0,h)$}
\end{center}
\end{figure}

We next show that there exist $\bar\rho$ and $M$ independent of $\th$
and $a$ such that $T_3$ is well defined and
$y(T_1+T_2+T_3;0,y_0,e_\th,\al)=-\xi y_0$ with $\xi>1$.

A simple computation yields
\brs
y_1&=&\frac1{\mu_-(a,\th,\ga)-\mu_+(a,\th,\ga)}\lp\ba{c}
e^{\mu_-(a,\th,\ga)T_1}\mu_+(a,\th,\ga)-e^{\mu_+(a,\th,\ga)T_1}\mu_-(a,\th,\ga)\\
\mu_-(a,\th,\ga)\mu_+(a,\th,\ga)(e^{\mu_-(a,\th,\ga)T_1}-e^{\mu_+(a,\th,\ga)T_1})
\ea\rp\\
&=&e^{-\bar\rho M}\lp\ba{c} -1\\ \mu_+(a,\th,\ga)\ea\rp +O(\th^2),
\ers
with $\|O(\th^2)\|\leq C\th^2$ and $C$ only depending on $M$ and $\bar\rho$.
(Similarly, in the sequel the symbol $O(\th)$ stands for a function of $\th$
upper bounded  by $C\th$ with $C$ only depending on $M$ and $\bar\rho$.)

In addition, one also gets that the first coordinate of $y_2$ is equal to
$$\left\{\ba{ll}
e^{-M\bar\rho}(M-1)+O(\th)&\mbox{if $a=0$,}\\
e^{-M\bar\rho}\lp M\frac{\mu_+(a,\th,\ga)}{\sin\th}\sinh\lp
\frac{\sin\th}{\mu_+(a,\th,\ga)}\rp-\cosh\lp \frac{\sin\th}{\mu_+(a,\th,\ga)}\rp\rp
+O(\th)&\mbox{if $a>0$,}\\
e^{-M\bar\rho}\lp M\frac{\mu_+(a,\th,\ga)}{\sin\th}\sin\lp
\frac{\sin\th}{\mu_+(a,\th,\ga)}\rp-\cos\lp
\frac{\sin\th}{\mu_+(a,\th,\ga)}\rp\rp+O(\th)&\mbox{if $a<0$.}
\ea
\right.
$$
Using \r{borni} one deduces that the first coordinate of $y_2$
is larger than
$$
\left\{\ba{ll}
e^{-M\bar\rho}( M/2\sinh(1/2)-\cosh(2))+O(\th)&\mbox{if $a>0$,}\\
e^{-M\bar\rho}( M/2\sin(1/2)-\cos(2))+O(\th)&\mbox{if $a<0$.}
\ea\right.
$$
Then in all three cases the first coordinate of $y_2$ becomes larger than
$$ e^{-M\bar\rho} (M C_0-C_1+O(\th)),$$
and one also gets that
 the second coordinate of $y_2$
can always be lower bounded by
$$ \sin\th e^{-M\bar\rho} (C_1-C_0/M+O(\th)),$$
with $C_0>0$ and $C_1>0$ independent of all the parameters.

Fix $M$ large and $\bar\rho\in(0,1)$ such that
$$%\be
e^{-M\bar\rho} (M C_0-C_1)\geq 2,\ \ \ e^{-M\bar\rho} (C_1-C_0/M)\geq C_1/2.
$$%\ee

Finally, by eventually reducing $h$ in order to make each $O(\th)$
uniformly small, one can ensure that the first coordinate of $y_2$
remains larger than $1$ and that its second coordinate is positive.

Similar computations to the ones provided above show that it is
possible to further ensure that $T_3\leq 2 T_1$.

Define $\tau=T_1+T_2+T_3$. Then
$M/(2\sin\th)<\tau<8M/\sin\th=\Tau_2(\th)$. Choose now
\be\label{gth2}
\gamma(\theta)=M(8+\sqrt{a})/\sin\th\geq\max(\Tau_2(\th),\Gamma_2(a,\th)).
\ee

By construction, $\al\in\G(\tau,\bar\rho\tau)$. To conclude the
proof it is enough
 to check
condition \r{(2)} on
$$\Om_*=\{(\th,\gamma)\mid 0<\th<h,\;0<\ga< \ga(\th)\}.$$
For $(\th,\ga)\in \Om_*$
 define
 $$A^\mathrm{stab}_{\th,\gamma}=A- b K_{\ga,\th}^T =
 \lp\ba{cc} 0&1\\ a-\ga^2 \sin\th& -\ga\cos\th\ea\rp.$$
 Then
 $$0<\det(A^\mathrm{stab}_{\th,\gamma})\leq C_0 |\Tr(A^\mathrm{stab}_{\th,\gamma})|+|a|,$$
 with $C_0=2M(8+\sqrt{|a|})$, implying \r{(2)}.
\end{proof}

The following corollary is a direct consequence of Remark~\ref{rem2}
and Proposition~\ref{rho_*}.
\begin{cor}
Take $\rho_*$ as in the statement of Proposition~\ref{rho_*}. For
every controllable pair $(A,b)\in M_2(\R)\times \R^2$, every $T>0$
and every $\rho<\rho_*$, if $\lb>0$ is large enough, then
 $(A+\lb\Id_2,b)$ is not $(T,\rho T)$-stabilizable. Moreover, if
 $0<\rho<\rho_*$ and $\lb>\tdd(J_2,1,\rho)$, then
$(J_2+\lb\Id_2,b_0)$ is not $(T,\rho T)$-stabilizable for every
$T>0$.
\end{cor}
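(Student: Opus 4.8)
The plan is to derive the corollary directly from Proposition~\ref{rho_*} and the elementary identities gathered in Remark~\ref{rem2}, with no new trajectory analysis needed. The one point requiring a word of justification is the link between $(T,\mu)$-stabilizability and the sign of the maximal rate of convergence $\tdd(\cdot,T,\mu)$, and only the easy direction of that link is used.

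First I would record the following observation: if $(A',b)$ is $(T,\mu)$-stabilizable, then $\tdd(A',T,\mu)>0$. Indeed, a $(T,\mu)$-stabilizer $K$ furnishes, by Definition~\ref{stab}, constants $C,\gamma>0$ such that $\|x(t;t_0,x_0,K,\al)\|\le Ce^{-\gamma(t-t_0)}\|x_0\|$ for every $\al\in\G(T,\mu)$; hence $\lb^+(\al,K)\le-\gamma$ for all such $\al$, so $\td(A',b,T,\mu,K)\ge\gamma$ and a fortiori $\tdd(A',T,\mu)\ge\gamma>0$. Contrapositively: if $\tdd(A',T,\mu)\le 0$, then $(A',b)$ is not $(T,\mu)$-stabilizable.

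For the first assertion, fix a controllable pair $(A,b)\in M_2(\R)\times\R^2$, $T>0$ and $\rho\in(0,\rho_*)$. Proposition~\ref{rho_*} gives $\tdd(A,T,\rho T)<+\infty$, and by the first identity in \r{ttt} one has $\tdd(A+\lb\Id_2,T,\rho T)=\tdd(A,T,\rho T)-\lb$, which is $\le0$ as soon as $\lb\ge\tdd(A,T,\rho T)$. By the observation above, $(A+\lb\Id_2,b)$ is then not $(T,\rho T)$-stabilizable; so the statement holds with ``$\lb$ large enough'' meaning $\lb\ge\tdd(A,T,\rho T)$.

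For the second assertion, apply Proposition~\ref{rho_*} to the controllable pair $(J_2,b_0)$ (which satisfies $\Tr(J_2)=0$) to obtain $\tdd(J_2,1,\rho)<+\infty$ for $\rho\in(0,\rho_*)$; recall from Remark~\ref{rem2}, via \r{elementary}, that $\tdd(J_2,T,\rho T)=\tdd(J_2,1,\rho)$ for every $T>0$. Then \r{ttt} gives $\tdd(J_2+\lb\Id_2,T,\rho T)=\tdd(J_2,1,\rho)-\lb$, which is strictly negative whenever $\lb>\tdd(J_2,1,\rho)$, \emph{uniformly in $T>0$}. Hence, by the observation, $(J_2+\lb\Id_2,b_0)$ is not $(T,\rho T)$-stabilizable for any $T>0$. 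No substantial obstacle is expected: beyond the bookkeeping with Remark~\ref{rem2}, the only subtlety is the stabilizability/$\tdd$ equivalence, of which only the trivial implication is invoked.
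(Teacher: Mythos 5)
Your proof is correct and follows exactly the route the paper intends: the corollary is stated there as a direct consequence of Remark~\ref{rem2} and Proposition~\ref{rho_*}, and your argument simply makes explicit the (easy) implication that $(T,\mu)$-stabilizability forces $\tdd>0$, combined with the shift identity \r{ttt} and the $T$-invariance of $\tdd(J_2,T,\rho T)$. Nothing is missing.
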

The above corollary establishes the existence of non-stabilizable PE
systems if the ratio $\rho=\mu/T>0$ is small enough and regardless of
$T$. This is rather intriguing when one recalls, on the one hand, that any
weak-$\star$ limit point $\al_\star$ of a sequence $(\al_{n})$, with
$\al_{n}\in \caG(T_n,\rho T_n)$ and $\lim_{n\rightarrow
  +\infty}T_n=0$, takes values in $[\rho,1]$ (see point~\ref{P1-1} of
Lemma~\ref{propri1}) and, on the other hand, that the switched system
$\dot x=J_2x+\al_\star(t) b_0u$, $\al_\star(t)\in [\rho,1]$, can be uniformly stabilized with an
arbitrary rate of convergence by taking the feedback law
$u_\gamma=-\gamma D_\gamma K x$, where $\gamma>0$ is arbitrarily large
and $K$ is provided by \cite[Lemma 4.0]{GK}.

\begin{rmk}
One possible interpretation of Proposition~\ref{rho_*} goes as
follows. Consider the destabilizing signals built in the
argument of the proposition back in the original time-scale, i.e.,
as $(1,\rho)$-signals. These signals take only the values $0,1$ over
time intervals of length
proportional to $1/\gamma$. Therefore, the
fundamental solution associated to $\dot x=(A-\al
b_0K_{\gamma,\th})x$ is a power of the product $A_1A_2A_3$, where
$A_1=\exp(T_1(A-b_0K_{\gamma,\th})/\gamma)$, $A_2=\exp(T_2 A/\gamma)$
 and
$A_3=\exp(T_3(A-b_0K_{\gamma,\th})/\gamma)$. 
The
stabilizing effect of $A-b_0K_{\gamma,\th}$ is countered by the
overshoot phenomenon occurring when the exponential of
$A-b_0K_{\gamma,\th}$ is taken only over small intervals of time.
%Then Proposition~\ref{rho_*} says that, 
If $\gamma$ is large enough,
%the procedure of systematically introducing the 
such overshoot
eventually destabilizes $\dot x=(A-\al b_0K_{\gamma,\th})x$.
\end{rmk}

\subsection{Further discussion on the maximal rate of convergence}
Let $(A,b)\in M(n,\R)\times \R^n$ be a controllable pair.
 Define
\be\label{rho00}
\rho(A,T)=\inf\{\rho\in (0,1]\mid \tdd(A,T,T\rho)=+\infty\}.
\ee
Notice that $\rho(A,T)$ is equal to $\rho(A/T,1)$ and does not depend on $\Tr(A)$
(see Remark~\ref{rem2}).

Proposition~\ref{rho^*} implies that $\rho(A,T)\leq \rho^*$ for some
$\rho^*\in(0,1)$ only depending on $n$.  In the case $n=2$, moreover
Proposition~\ref{rho_*} establishes a uniform lower bound
$\rho(A,T)\geq \rho_*>0$.

The following lemma collects some further
properties of the function
$T\mapsto \rho(A,T)$. % defined on $(0,+\infty)$.

\begin{lemma}
Let $(A,b)\in M_n(\R)\times \R^n$ be a controllable pair. Then (i)
$T\mt \rho(A,T)$ is locally Lipschitz on $(0,+\infty)$; (ii) there exist
$\lim_{T\to+\infty}\rho(A,T)=\sup_{T>0}\rho(A,T)$ and $\lim_{T\to
0^+}\rho(A,T)=\inf_{T>0}\rho(A,T)$.
\end{lemma}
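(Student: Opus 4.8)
The plan is to pass from $\rho(A,T)$ to the ``threshold mass'' $\mu^*(A,T):=T\,\rho(A,T)$, and to run everything on the elementary \emph{comparison principle}: if $\G(T,\mu)\subseteq\G(T',\mu')$ then every gain does at least as well against the smaller disturbance class, so $\td(A,b,T,\mu,K)\ge\td(A,b,T',\mu',K)$ for all $K$, whence $\tdd(A,T,\mu)\ge\tdd(A,T',\mu')$. Applied to the trivial inclusions $\G(T,\rho T)\subseteq\G(T,\rho'T)$ for $\rho'<\rho$ (point~\ref{rho202} of Lemma~\ref{proprieta}), this says $\tdd(A,T,\cdot)$ is nondecreasing, so $\{\mu\in(0,T]\mid\tdd(A,T,\mu)<+\infty\}$ is a subinterval of $(0,T]$ with supremum exactly $\mu^*(A,T)$, finiteness holding for every $\mu<\mu^*(A,T)$; and since $\G(T,T)$ contains only $\alpha\equiv1$, pole placement gives $\tdd(A,T,T)=+\infty$, so $0\le\mu^*(A,T)\le T$ and $\rho(A,T)=\mu^*(A,T)/T\in[0,1]$.

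For (i) I would use two inclusions. First, trivially $\G(T,\mu)\subseteq\G(T',\mu)$ for $T\le T'$, which yields $\tdd(A,T',\mu)\le\tdd(A,T,\mu)$ and hence that $\mu^*(A,\cdot)$ is nondecreasing. Second, point~\ref{ligio} of Lemma~\ref{proprieta} gives $\G(T,\mu)\subseteq\G(T-\eta,\mu-\eta)$ for $0<\eta<\mu\le T$, hence $\tdd(A,T-\eta,\mu-\eta)\le\tdd(A,T,\mu)$; letting $\mu\uparrow\mu^*(A,T)$ (the claim being trivial if $\mu^*(A,T)\le\eta$) we get $\mu^*(A,T)-\eta\le\mu^*(A,T-\eta)$. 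Combining, $0\le\mu^*(A,T)-\mu^*(A,T-\eta)\le\eta$ for all $0<\eta<T$, i.e.\ $\mu^*(A,\cdot)$ is $1$-Lipschitz on $(0,+\infty)$. Since also $0\le\mu^*(A,T)\le T$, the quotient $\rho(A,T)=\mu^*(A,T)/T$ is locally Lipschitz: on $[a,b]\subset(0,+\infty)$ one bounds $|\rho(A,T)-\rho(A,S)|$ by $\frac{S|\mu^*(A,T)-\mu^*(A,S)|+\mu^*(A,S)|S-T|}{TS}\le\frac{2}{a}|T-S|$.

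For (ii) the point to notice is that $\rho(A,\cdot)$, being a ratio of the slowly varying $\mu^*$ by $T$, need not be monotone, so the identities $\lim=\sup$ and $\lim=\inf$ are not automatic from boundedness; they come instead from point~\ref{rhon} of Lemma~\ref{proprieta}, which lets a finiteness threshold computed at one window dominate the thresholds at all much larger windows. Fix $T_0>0$, put $\rho_0=\rho(A,T_0)$. For the limit at $+\infty$: if $\rho_0=0$ then $\liminf_{T\to+\infty}\rho(A,T)\ge0=\rho_0$ trivially; if $\rho_0>0$ fix $\eps\in(0,\rho_0)$ and pick $\rho_0-\eps<\rho''<\rho'<\rho_0$. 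By point~\ref{rhon} there is $M=M(\rho',\rho'')$ with $\G(T_0,\rho'T_0)\subseteq\G(T,\rho''T)$ whenever $T\ge MT_0$; since $\rho'<\rho_0$ we have $\tdd(A,T_0,\rho'T_0)<+\infty$, so $\tdd(A,T,\rho''T)<+\infty$ and $\rho(A,T)\ge\rho''>\rho_0-\eps$ for all such $T$. Letting $\eps\downarrow0$ and then taking the supremum over $T_0$ gives $\liminf_{T\to+\infty}\rho(A,T)\ge\sup_{T>0}\rho(A,T)$; the reverse inequality is trivial, so the limit exists and equals the supremum. The limit at $0^+$ is symmetric: if $\rho_0=1$ it is trivial; if $\rho_0<1$ fix $\eps$ with $\rho_0+\eps\le1$ and pick $\rho_0<\rho'<\rho''\le\rho_0+\eps$, apply point~\ref{rhon} with the \emph{small} window playing the role of ``$\tau$'' to get $\G(T,\rho'T)\subseteq\G(T_0,\rho''T_0)$ for $T\le T_0/M(\rho',\rho'')$; since $\rho''>\rho_0$ we have $\tdd(A,T_0,\rho''T_0)=+\infty$, so $\tdd(A,T,\rho'T)=+\infty$ and $\rho(A,T)\le\rho'<\rho_0+\eps$ for all such $T$. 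Letting $\eps\downarrow0$ and taking the infimum over $T_0$ gives $\limsup_{T\to0^+}\rho(A,T)\le\inf_{T>0}\rho(A,T)$, and the reverse inequality is trivial.

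The only genuinely delicate point is the one just flagged for (ii): monotonicity of $\rho(A,\cdot)$ is unavailable, and what forces the endpoint limits to be the global extrema is precisely the ``averaging over a longer window'' inclusion $\G(\tau,\rho\tau)\subseteq\G(T,\rho'T)$ for $T/\tau$ large and $\rho'<\rho$, combined with the remark that enlarging the window can only help the controller and hence can only lower the finiteness threshold. Everything else reduces to the comparison principle and the Lipschitz bookkeeping on $\mu^*$.
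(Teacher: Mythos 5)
Your overall strategy coincides with the paper's: part (i) rests on point~\ref{ligio} of Lemma~\ref{proprieta} (your reformulation through $\mu^*(A,T)=T\rho(A,T)$ is a cosmetic repackaging of the same two one-sided estimates, and your comparison principle is stated correctly), and part (ii) rests on point~\ref{rhon}. Part (i) and the $T\to+\infty$ half of (ii) are correct.

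There is, however, a concrete error in the $T\to 0^+$ half of (ii). You invoke the inclusion $\G(T,\rho'T)\subseteq\G(T_0,\rho''T_0)$ with $\rho'<\rho''$ and $T$ small. Point~\ref{rhon} goes the other way: in passing from a small window to a much larger one the guaranteed density can only \emph{decrease}, i.e. it gives $\G(\tau,\rho\tau)\subseteq\G(T,\rho'T)$ only for $\rho'<\rho$. The inclusion as you wrote it is false in general (take $\al\equiv\rho'$: it belongs to $\G(T,\rho'T)$ for every $T$ but to no $\G(T_0,\rho''T_0)$ with $\rho''>\rho'$). The repair is immediate and leaves the structure of your argument intact: with $\rho_0<\rho'<\rho''\le\rho_0+\eps$ use the correct inclusion $\G(T,\rho''T)\subseteq\G(T_0,\rho'T_0)$ for $T\le T_0/M$; since $\rho'>\rho_0$ one has $\tdd(A,T_0,\rho'T_0)=+\infty$, hence by comparison $\tdd(A,T,\rho''T)=+\infty$ and $\rho(A,T)\le\rho''\le\rho_0+\eps$, which is all you need to conclude that $\limsup_{T\to0^+}\rho(A,T)\le\inf_{T_0>0}\rho(A,T_0)$.
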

\begin{proof}
In order to prove  (i),
notice that point~{\ref{ligio}} in Lemma~\ref{proprieta} implies that
if $\tdd(A,T,\rho T)<+\infty$ then for every $\eta\in(0,\rho T)$,
\br
\tdd\lp A,T+\eta,\frac{\rho T}{T+\eta}(T+\eta)\rp&<&+\infty,\label{oone}\\
\tdd\lp A,T-\eta,
\frac{\rho T-\eta}{T-\eta}(T-\eta)\rp&<&+\infty.\label{ttwo}
\er
From \r{oone} we deduce that 
for every $\eta\in(0,\rho(A,T) T)$,
\be\label{mono}
\rho(A,T+\eta)\geq \frac{\rho(A,T) T}{T+\eta},
\ee
and thus 
$$\rho(A,T)-\rho(A,T+\eta)\leq \eta/T.$$

Similarly, \r{ttwo} implies that,  for every $\eta\in(0,\rho(A,T) T)$,
$$\rho(A,T-\eta)\geq \frac{\rho(A,T) T-\eta}{T-\eta}.$$
Therefore, one has % preceding inequality can  be rewritten as
\be\label{fnin}
\rho(A,T)\geq \frac{\rho(A,T+\eta)(T+\eta)-\eta}T
\ee
for every $\eta$ satisfying $0<\eta<\rho(A,T+\eta) (T+\eta))$ and in particular for every $\eta\in (0,\rho(A,T) T)$ (see \r{mono}).
We obtain from \r{fnin} that $\rho(A,T+\eta)-\rho(A,T)\leq \eta/T$ and we conclude that
%for $\eta$ small enough,
$$|\rho(A,T+\eta)-\rho(A,T)|\leq \frac \eta T$$
for every $\eta\in (0,\rho(A,T) T)$.

As for point (ii), it suffices to deduce from point~{\ref{rhon}} in
Lemma~\ref{proprieta} that if $0<\rho'<\rho<1$ then there exists $M>0$
such that
whenever $\tdd(A,T,\rho T)=+\infty$ one has $\tdd(A,\ga,\rho'
\ga)=+\infty$ for every $\ga>0$ such that $\ga/T>M$.
\end{proof}

\begin{rmk}
In the case $A=J_n$ equality \r{Nro} implies that the function $T\mt \rho(J_n,T)$ is constant. When $n=2$ its constant value is positive, due to Proposition~\ref{rho_*}.
\end{rmk}

\section{Open problems}\label{OP}
We conclude the paper by providing some questions 
that arose from our investigation of single-input persistently excited linear systems.  

\begin{conjecture}\label{POLE}
Does Proposition~\ref{rdc0} still hold true in dimension bigger than
two? Notice that the  proof provided here
essentially relies on the controllability of \rref{unit0} in finite
time.
\end{conjecture}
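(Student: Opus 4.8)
The plan is to imitate the proof of Proposition~\ref{rdc0} and to isolate precisely the point at which dimension two is used. Fix a controllable pair $(A,b)\in M_n(\R)\times\R^n$. As in the planar case, \r{chco1}, \r{chco} and \r{ttt} let one assume $(A,b)$ in companion form with $\Tr(A)=0$, so $b=(0,\dots,0,1)^T$. Suppose $\tdd(A,T,\mu)=+\infty$, so that for every $C>0$ there is a gain $K$ with $\td(A,b,T,\mu,K)>C$. Here a first sub‑step is needed: by \r{td-vp} such a $K$ must push the spectrum of $A-bK^T$ far into the left half‑plane, which forces $K$ to be of high‑gain type, and one should argue that one may take $K=\gamma D_{n,\gamma}\overline K$ with $\gamma$ large and $\overline K$ bounded with nonzero entries. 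After the time‑space transformation $y(t)=D_{n,\gamma}^{-1}x(\gamma t)$ of \r{tst0}, the closed loop becomes $\dot y=(J_n+O(1/\gamma))y-\al\, b\,\overline K^Ty$, a $O(1/\gamma)$ perturbation of the pure $n$‑integrator feedback. This high‑gain normalization is essential because the cheap two‑dimensional reflection does not survive in dimension $n$: for trace‑zero companion $A$ one has $A\sim -A$ (via $\diag(1,-1)$) only when $n=2$, whereas for $n\ge3$ the identity $DAD^{-1}=-A$ holds only for exceptional spectra. For the pure integrator, however, $D=\diag((-1)^{n-1},\dots,-1,1)$ does satisfy $DJ_nD^{-1}=-J_n$ and $Db=b$; defining $\overline K_-$ by $\overline K_-^T=-\overline K^TD^{-1}$ gives $D(J_n-\al\, b\,\overline K^T)D^{-1}=-(J_n-\al\, b\,\overline K_-^T)$, so a trajectory of the $\overline K$‑loop decaying at rate $>C$, read backwards in time and through $D$, becomes a trajectory of the $\overline K_-$‑loop growing at rate $>C$; the $O(1/\gamma)$ terms should then be absorbed by a robustness argument valid for $\gamma$ large.

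The step that does not come for free is the passage from ``fast decay for every signal in $\G(T,\mu)$ and every initial condition'' to ``fast growth for some signal in $\G(T,\mu)$ and some initial condition''. As in dimension two, time reversal does not preserve $\G(T,\mu)$ over all of $\Rp$: one gets a fast‑growing trajectory of the $\overline K_-$‑loop on a bounded interval and must then close it into a periodic trajectory whose signal is again a $(T,\mu)$‑signal. In the plane this was achieved by steering the angular variable all the way around $S^1$ using \r{unit0}, the decisive book‑keeping being (i) that the total time spent with $\al=0$ during the closing segment can be kept below $T-\mu$, so the periodic extension is admissible, and (ii) that the length of the closing segment is bounded independently of the ``size'' of the trajectory being closed. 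Granting the $n$‑dimensional analogue --- namely that the control system on $S^{n-1}$ (equivalently on $\R P^{n-1}$) whose admissible velocities are the radial projections of $x\mapsto (A-\al\, b\,K^T)x$, $\al\in[0,1]$, is completely controllable in finite time by controls for which $\{t:\al(t)=0\}$ has total measure $<T-\mu$, with the transfer time bounded uniformly --- the contradiction argument of Proposition~\ref{rdc0} and the rate estimate carry over verbatim, yielding $\tDD(A,T,\mu)=+\infty$; the reverse implication is symmetric.

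The main obstacle is exactly this projective controllability with a prescribed persistent‑excitation budget, which is the point flagged in the statement of the open problem. In dimension two the phase portrait of a planar linear field on $S^1$ is essentially trivial --- at most two pairs of equilibria, or a single periodic orbit when the spectrum is non‑real --- which is what made the hand‑drawn trajectories of Figures~\ref{miod} and~\ref{caso2} possible. For $n\ge3$ the projectivization of a single linear vector field can carry a much richer recurrent structure (several invariant submanifolds coming from the generalized eigenspaces, invariant spheres supporting periodic orbits for the complex blocks, heteroclinic networks between them), and it is unclear how to steer between prescribed directions while keeping $\al=0$ active for only a short total time. A plausible line of attack uses the extra information that $\tdd=+\infty$ forces $K$ to be a high gain: for $\gamma$ large the $\overline K$‑loop should be, in the rescaled picture, close to a single dominant Hurwitz mode attached to the integrator tail, so that its projective flow on $S^{n-1}$ is a small perturbation of a gradient‑like flow with a unique attracting direction; one would then steer by alternating short $\al=0$ arcs (which move ``along the tail'') with $\al=1$ arcs (which contract towards the attractor), keeping the cumulative $\al=0$ time controlled. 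Turning this heuristic into a controllability statement that is uniform in the parameters and compatible with the PE budget is where the genuine difficulty lies.
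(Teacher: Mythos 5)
The statement you are addressing is not a theorem of the paper but Open Problem~\ref{POLE}: the authors pose the extension of Proposition~\ref{rdc0} to $n>2$ as an open question and give no proof, explicitly flagging that their two-dimensional argument hinges on the controllability of \rref{unit0} on the circle. So there is nothing in the paper to compare your argument against, and, to your credit, your write-up is honest about the fact that it does not close the question: you end by conceding that the projective controllability statement on $S^{n-1}$ with a prescribed persistent-excitation budget ``is where the genuine difficulty lies.'' That is precisely the obstruction the authors themselves identify, so your diagnosis is correct, but what you have produced is a roadmap, not a proof.

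Two further points on the roadmap itself. First, your reduction to high-gain feedbacks $K=\gamma D_{n,\gamma}\overline K$ is asserted, not proved: \rref{td-vp} only bounds the rate of convergence by the spectral abscissa of $A-\bar\alpha bK^T$, and deducing from ``all eigenvalues far left for all $\bar\alpha\in[\mu/T,1]$'' that $K$ has the specific anisotropic form $\gamma D_{n,\gamma}\overline K$ with $\overline K$ bounded and with nonzero entries requires an argument (in the planar proof the authors only need $k_1,k_2,k_1/k_2$ large, which follows from elementary symmetric-function considerations in dimension $2$; the $n$-dimensional analogue is not automatic). Second, even granting that reduction, the absorption of the $O(1/\gamma)$ terms and, above all, the uniform-in-parameters controllability of the projectivized system with total $\al=0$ time below $T-\mu$ and uniformly bounded transfer time are exactly the open content of the problem; your gradient-like-perturbation heuristic for large $\gamma$ is plausible but unsubstantiated. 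In short: correct identification of the difficulty, no resolution of it, and one unjustified intermediate reduction.
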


\begin{conjecture}\label{inf0}
%Let $n$ be a positive integer. 
Consider the constant $\rho_n^*$ defined as the upper lower bound  
for all the $\rho^*$'s 
satisfying  the statement of Proposition~\ref{rho^*} ($n$ fixed).
%It would be interesting to investigate 
What can be said on 
the dependance of $\rho_n^*$
on $n$ as $n\to\infty$?
\end{conjecture}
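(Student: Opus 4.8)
Since the statement is an \emph{open problem}, the plan I would follow is to first recast $\rho_n^*$ in a workable form and then bracket it between a stabilization (upper) bound and a destabilization (lower) bound, each kept explicit in $n$. By the monotonicity of $\tdd$ and $\tDD$ in their last argument (Remark~\ref{rem2}), and modulo the coincidence of the convergence and divergence thresholds --- proved in dimension two in Proposition~\ref{rdc0} and, for $n\ge3$, itself the content of Open problem~\ref{POLE} --- one has
$$\rho_n^*=\sup\{\rho(A,1)\mid (A,b)\in M_n(\R)\times\R^n\text{ controllable}\},\qquad \rho(A,1)=\inf\{\rho\in(0,1]\mid \tdd(A,1,\rho)=+\infty\}.$$
My expectation is that $1-\rho_n^*$ does not stay bounded away from $0$, i.e.\ $\rho_n^*\to 1$ as $n\to\infty$, with $1-\rho_n^*$ decaying polynomially; the increasingly violent overshoot of the $n$-integrator is what should force this. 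The competing effect --- a stabilizing high-gain feedback also becomes more contractive in higher dimension --- is precisely why this is not obvious, so that deciding the dichotomy ``$\rho_n^*\to1$'' versus ``$\limsup_n\rho_n^*<1$'' is the core of the problem.

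For the \emph{upper bound} I would revisit the proof of Proposition~\ref{rho^*}, where one may take $\rho^*=1-C_1/(2C_2)$ with $C_1=1/\lambda_{\max}(P)$ and $C_2$ a multiple of $\|Pb\|\,\|K\|/\lambda_{\min}(P)$, the matrix $P$ solving $(J_n-bK^T)^TP+P(J_n-bK^T)=-\Id_n$ for a Hurwitz closed loop $J_n-bK^T$. Two degrees of freedom remain: the shape of the closed-loop spectrum $\sigma(J_n-bK^T)$ and the dilation $K\mapsto \gamma D_\gamma K$, $D_\gamma=\diag(\gamma^{n-1},\dots,\gamma,1)$, compatible with \r{elementary}; I would optimise over both, picking a spectrum with tame transients (a Bessel- or Butterworth-type pattern in a fixed strip) and estimating the condition number of $P$ together with $\|Pb\|$ and $\|K\|$ in terms of $n$ alone. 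Because the closed loop is forced to be a companion matrix, the coefficients of a normalised degree-$n$ Hurwitz polynomial necessarily grow with $n$, so $C_1/C_2\to0$; quantifying that growth --- the routine but delicate part --- gives $\rho_n^*\le 1-c\,n^{-\beta}$ (or, at worst, an exponentially small gap, which would already establish $\rho_n^*\to1$).

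For the \emph{lower bound} $\rho_n^*\ge\rho(J_n,1)$ I would generalise the dimension-two construction of Proposition~\ref{rho_*}. After the scaling $y_\gamma(\cdot)=D_\gamma x(\cdot/\gamma)$ and restriction to one period, the monodromy of the candidate destabilising trajectory is a product $\exp(T_3 M_3)\exp(T_2 J_n)\exp(T_1 M_1)$ in which $M_1,M_3$ are the rescaled Hurwitz closed loops ($J_n-bK^T$ up to an $O(\gamma^{-2})$ perturbation, with $K$ of size $O(1)$) and the middle factor is the \emph{uncontrolled} drift $J_n$. The task is to choose $T_1,T_2,T_3$ --- with $T_2$ as small as possible relative to $T_1+T_2+T_3$, since the excitation deficit $1-\rho$ must exceed $T_2/(T_1+T_2+T_3)$ --- and the closed-loop spectrum so that this product has spectral radius $>1$ uniformly in $\gamma$. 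The mechanism to exploit is the overshoot of the $n$-integrator: $\exp(sJ_n)$ amplifies suitable directions by a factor of order $s^{\,n-1}$, so that if a shorter ``off'' window already yields net expansion, the admissible $\rho$ --- hence $\rho_n^*$ --- can be pushed toward $1$ as $n$ grows. The planar phase-portrait argument used in Proposition~\ref{rho_*} to close the trajectory into a genuine periodic orbit has no direct higher-dimensional analogue; I would replace it either by a perturbative analysis of the three-factor product around an explicit model, or by combining the finite-time controllability of the PE system on a length-$(T-\mu)$ excess (from \cite{MCSS}) with a uniform bound on the connecting arc, exactly as in the two-dimensional case.

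The \textbf{main obstacle} is this lower bound. Already the qualitative fact $\rho(J_n,1)>0$ for $n\ge3$ --- the higher-dimensional analogue of Proposition~\ref{rho_*} --- is not on record, and obtaining the sharp $n$-dependence requires controlling, uniformly in $\gamma$, the competition between the overshoot gain accumulated on the ``off'' phase and the contraction accumulated on the two ``on'' phases; the planar geometry that made this balance transparent in dimension two is exactly what is missing. A secondary difficulty is matching the exponent of the upper bound with that of the lower bound: even a gap there would settle the qualitative question but leave the precise rate of $1-\rho_n^*\to0$ as a natural refinement of the present problem.
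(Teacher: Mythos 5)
This statement is an explicit \emph{open problem} (Open problem~\ref{inf0}); the paper offers no proof, so there is nothing to compare against, and you correctly recognise this at the outset by offering a research plan rather than a proof. Your reformulation $\rho_n^*=\sup_A\rho(A,1)$ is correct modulo the coincidence of the convergence and divergence thresholds, and you rightly flag that this coincidence is itself Open problem~\ref{POLE} once $n\ge 3$. The bracketing scheme --- an upper (stabilisation) bound obtained by optimising the Lyapunov argument of Proposition~\ref{rho^*}, and a lower (destabilisation) bound from a higher-dimensional analogue of Proposition~\ref{rho_*} --- is the natural one, and your identification of the destabilisation side as the core obstruction, conditional on Open problem~\ref{ndim} (even the positivity $\rho(J_n,1)>0$ is unknown for $n\ge 3$) and then on a quantitative refinement, is accurate.

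There is, however, a logical slip in the upper-bound paragraph. You write that an estimate $\rho_n^*\le 1-c\,n^{-\beta}$, or even an exponentially small margin, ``would already establish $\rho_n^*\to1$.'' It would not: an upper bound $\rho_n^*\le 1-\eps_n$ with $\eps_n\to 0$ carries no information beyond $\rho_n^*<1$, which is already Proposition~\ref{rho^*}. A shrinking upper margin is merely \emph{consistent} with $\rho_n^*\to1$; only a lower bound $\rho_n^*\ge 1-\eps'_n$ with $\eps'_n\to 0$ could establish the limit, and that is precisely the destabilisation side you flag as wide open. A secondary caveat: the claim that $C_1(K)/C_2(K)\to 0$, uniformly over Hurwitz companion closed loops $J_n-bK^T$, is itself a nontrivial optimisation statement --- one is free to spread the spectrum rather than confine it to a fixed strip, and the observation that normalised high-degree Hurwitz polynomials have large coefficients does not by itself decide it.
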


\begin{conjecture}\label{ndim}
We conjecture that Proposition~\ref{rho_*} holds true in
dimension $n>2$. Note however that the proof given in the 2D case
cannot be easily extended to the case in which $n>2$. Indeed, our
strategy is based on a complete parameterization of the candidate
feedbacks for stabilization and on the explicit construction of a
destabilizing signal $\al$ for every value of the parameter $\th$,
which %varies in a subset of 
takes values in 
the one-dimensional sphere. In the
general case, the parameter would belong to an $(n-1)$-dimensional
manifold and an explicit construction, if possible, would be more
intricate.
\end{conjecture}

\begin{conjecture}\label{waw}
It is a challenging question to 
determine whether the
function $T\mapsto \rho(A,T)$ (defined in \r{rho00}) is constant for a general matrix $A$.
If this is true, one may wonder whether its constant value
depends on $A$. Otherwise, a natural question would be
to understand the dependence of
$\lim_{T\to 0^+}\rho(A,T)$ and $\lim_{T\to+\infty}\rho(A,T)$ on the matrix $A$.
\end{conjecture}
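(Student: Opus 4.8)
The plan is to reduce the question to a single one-parameter family of normalized systems and then to play the known upper bounds (high gain, Proposition~\ref{rho^*}) against the explicit destabilizing constructions (Proposition~\ref{rho_*} and the phase-plane analysis behind it). First I would use that $\rho(A,T)=\rho(A/T,1)$ and that $\rho$ is invariant under similarity and under translation by $\lambda\Id_n$ (Remark~\ref{rem2}): constancy of $T\mapsto\rho(A,T)$ is then equivalent to the assertion that $c\mapsto\rho(cA,1)$ is constant on $(0,+\infty)$ for one fixed traceless representative of the similarity class of $A$. Two families are settled a priori. If $\sigma(A)\subset i\R$ then $\tdd(A,T,\mu)=0$ for all $T,\mu$ by Theorem~\ref{thm2}, so $\rho(A,T)\equiv 0$ and the value does not depend on $A$. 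If $A$ has a single eigenvalue --- equivalently, for a controllable pair, if $A$ has a single Jordan block, so that after removing the trace $A$ is similar to a fixed nilpotent Jordan block --- then the dilation identity \r{elementary}--\r{Nro} shows $cA\sim A$ for every $c>0$, hence $\rho(cA,1)=\rho(A,1)$; the function is again constant, with positive value for $A=J_2$. So the genuinely open regime consists of matrices carrying at least two incommensurable time-scales, e.g. two distinct real eigenvalues.

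In dimension two the reduction can be carried out completely. A traceless controllable $(A,b)$ has $A$ similar either to $\diag(s,-s)$ with $s>0$, or to a matrix with purely imaginary spectrum, or to $J_2$; the last two cases give, respectively, $\rho\equiv 0$ and a positive constant, so the only open case is $A\sim\diag(s,-s)$, and since $\rho$ is similarity-invariant the whole two-dimensional question is: does $s\mapsto\rho(\diag(s,-s),1)$ depend on $s>0$? Equivalently, writing $\rho(\diag(s,-s),1)=\rho(\diag(1,-1),1/s)$, is $T\mapsto\rho(\diag(1,-1),T)$ constant? The Lemma preceding this Open Problem reduces that to the single equality $\lim_{s\to0^+}\rho(\diag(s,-s),1)=\lim_{s\to+\infty}\rho(\diag(s,-s),1)$; in the slow regime $s\to 0^+$ the drift degenerates to the (non-controllable) zero matrix, while in the fast regime $s\to+\infty$ the overshoot phenomenon of the Remark after Proposition~\ref{rho_*} is most violent. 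The concrete program is then: (i) upgrade the Lyapunov-equation argument of Proposition~\ref{rho^*} so as to carry the $s$-dependence of all the constants and thereby produce an $s$-dependent upper bound for $\rho(\diag(s,-s),1)$; (ii) track the $s$-dependence of the broken-line destabilizing trajectories of Proposition~\ref{rho_*} (the switching angles, the times $T_1,T_2,T_3$, the overshoot factor $\xi$) to obtain a matching lower bound. If the two one-sided limits of the resulting function differ, the answer to ``constant for every $A$'' is negative and one reads off $\lim_{T\to 0^+}\rho(A,T)$ and $\lim_{T\to+\infty}\rho(A,T)$ in terms of the eigenvalue ratio; if they always coincide one is led to conjecture constancy and to look for a conceptual reason.

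I expect the main obstacle to be precisely the absence, outside the single-block case, of the dilation symmetry that makes that case easy. A proof of constancy for general $A$ would presumably have to produce, for each $c>0$, a change of coordinates --- necessarily time-dependent once several Jordan blocks of different sizes or several eigenvalues are present --- conjugating the family $\{\dot x=(cA-\alpha bK^T)x:\alpha\in\G(1,\rho)\}$ onto the $c=1$ family while mapping $\G(1,\rho)$ into itself, and no analogue of $D_{n,\lambda}$ is available when the block structure forces incommensurable rates. The difficulty is structural: $\rho(A,T)$ is defined by a minimax over all gains $K$ and all PE signals, and the only handles are the one-sided estimates of Propositions~\ref{rho^*} and~\ref{rho_*}, neither of which identifies $\rho(A,T)$ exactly, so turning ``upper bound $\le$ lower bound'' into an identity --- or, conversely, producing a strict inequality $\rho(A,T)\ne\rho(A,T')$ --- would require new quantitative control of the worst-case signal as the rate-to-window ratio varies. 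Finally, as already noted in Conjecture~\ref{ndim}, the planar phase-portrait techniques used in (i)--(ii) do not extend to $n>2$; a more robust route there would be to combine the Gauthier--Kupka / Dayawansa Lyapunov construction underlying Theorem~\ref{thm2} with a homogenization of the blocks of largest and smallest size, in the inductive spirit of Proposition~\ref{ult0}, so that even a complete two-dimensional answer would leave the higher-dimensional dependence questions of Conjecture~\ref{waw} largely open.
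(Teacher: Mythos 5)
The item you are addressing is Open Problem~\ref{waw}, not a theorem: the paper states it as a question and supplies no proof, so there is no argument in the paper to compare against. What you have written is a research programme, which is the right posture here, and the reduction you carry out at the start --- using $\rho(A,T)=\rho(A/T,1)$ and the invariances from Remark~\ref{rem2} to rephrase constancy in $T$ as constancy of $c\mapsto\rho(cA,1)$ on a traceless representative, and observing that the single-Jordan-block case is settled by the dilation symmetry \r{elementary}--\r{Nro} --- matches the one piece of information the paper actually records about $\rho(A,T)$, namely the Remark closing Section~\ref{s_maximal-rate} that $T\mapsto\rho(J_n,T)$ is constant.

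There is, however, a concrete error in your case analysis. You assert that $\sigma(A)\subset i\R$ gives $\tdd(A,T,\mu)=0$ by Theorem~\ref{thm2} and hence $\rho(A,T)\equiv 0$. Both halves are wrong. Theorem~\ref{thm2} produces a $(T,\mu)$-stabilizer, so it gives $\tdd(A,T,\mu)>0$, not $\tdd=0$; and, more to the point, Proposition~\ref{rho_*} applies to \emph{every} controllable pair $(A,b)\in M_2(\R)\times\R^2$, including those with $\det(A)>0$ (purely imaginary spectrum): its phase-plane construction is carried out explicitly for $a<0$. So for $n=2$ one has $\rho(A,T)\geq\rho_*>0$ for all such $A$, and in particular $\rho(A,T)\not\equiv 0$. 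This error propagates to your claim that ``the only open case [in $2$D] is $A\sim\diag(s,-s)$'': since a diagonal conjugation shows that $c\,A_0$-type matrices with different $c$ are not similar (their spectra differ), the imaginary-spectrum family carries a genuine $c$-dependence and the question is just as open there as in the saddle case. Apart from this the discussion of obstacles (absence of a joint dilation symmetry across Jordan blocks, no exact identification of $\rho(A,T)$ from the one-sided bounds of Propositions~\ref{rho^*} and~\ref{rho_*}, and the non-extensibility of the planar phase-portrait arguments to $n>2$) is accurate and consistent with Conjectures~\ref{ndim} and~\ref{waw}.
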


%\subsection{Remarks on the pole shifting property for classes of PE control systems}

 \begin{conjecture}\label{last1}
%Let $n=2$. 
Proposition~\ref{rho_*}
states that, for $n=2$ and $\mu/T$ small, the PE control system
$\dot x=Ax+\al bu$, $\al\in \G(T,\mu)$, does not have the
pole-shifting property  (see Remark~\ref{rem3}). 
 It makes therefore sense to
investigate additional conditions to impose on the PE signals
(periodicity, positive dwell-time, uniform bounds on the derivative of
the PE signal, etc) so that the pole-shifting property holds true for
these restricted classes of PE signals, regardless of the ratio
$\mu/T$.  
%Proposition~\ref{rdc0} guarantees that the issue reduces, at least in dimension
%two, to checking that the rate of convergence can be made arbitrary
%large. 
First of all, the  subclass of
periodic PE signals must be excluded, since the destabilizing inputs constructed in
Proposition~\ref{rho_*} are periodic.
 It is also clear that, for the subclass of
 ${\cal{G}}(T,\mu)$ given by all signals with a positive dwell time $t_d>0$, one gets
 arbitrary rate of convergence (or divergence) with a linear constant feedback, for
 every choice of $T,\mu,t_d$. 
 %We conclude with yet another
 Here follows our 
 conjecture.

   Given $T,M>0$ and $\rho\in (0,1]$, let ${\cal{D}}(T,\rho, M)$ be
   the subset of $\G(T,\rho T)$ whose signals are globally Lipschitz
   over $[0,+\infty)$ with Lipschitz constant bounded by $M$. Then,
   given a controllable pair $(A,b)$,
   we
   conjecture that it is possible to stabilize (respectively, destabilize) by a linear feedback
   the system $\dot x=Ax+\al b u$, $\al\in {\cal{D}}(T,\rho, M)$, with an arbitrarily large rate of convergence (respectively, divergence), i.e., we conjecture that for every $C>0$ there exist two gains $K_1$ and $K_2$ such that for every $\alpha\in {\cal{D}}(T,\rho, M)$
the maximal Lyapunov exponent of $\dot x=(A-\alpha bK_1^T)x$ is smaller than
$-C$ and the 
the minimal Lyapunov exponent of $\dot x=(A-\alpha bK_2^T)x$ is larger than
$C$.
 \end{conjecture}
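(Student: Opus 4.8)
The natural plan is to adapt the high-gain construction of Proposition~\ref{rho^*} to small values of $\rho=\mu/T$, using the Lipschitz bound $M$ to exclude the fast-switching destabilizing signals responsible for the finiteness of $\tdd$ in Proposition~\ref{rho_*}.

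As a preliminary reduction, by \r{chco1}, \r{chco} and \r{ttt} one may assume that $(A,b)$ is in companion form with $\Tr(A)=0$, so that $b=(0,\dots,0,1)^T$ and $A=J_n+bK_A^T$ with $K_A^Tb=0$; moreover it suffices to treat the stabilization statement, the destabilization one following by the same analysis with the Lyapunov equation of reversed sign, exactly as in the second half of Proposition~\ref{rho^*} and in the converse part of Proposition~\ref{rdc0}. Fix a threshold $\delta\in(0,\rho)$ and let $\bar K=\bar K(\delta)$, $S=S(\delta)\succ0$ and $c_0=c_0(\delta)>0$ be furnished by \cite[Lemma~4.0]{GK} (the Dayawansa gain already used in Theorems~\ref{theo} and~\ref{thm2}), so that $V(z)=z^TSz$ decreases at rate at least $c_0$ along every solution of $\dot z=(J_n-\bar\alpha(t)\bar K e_1^T)z$ with $\bar\alpha(t)\in[\delta,1]$ measurable. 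For $\gamma>0$ set $K_\gamma=\gamma D_{n,\gamma}\bar K$ (after the change of coordinates putting the closed loop in the form used in the passage from \r{sw_GK} to \r{sw_GK_X}), and, given $\alpha\in\caD(T,\rho,M)$, apply the space rescaling $z=D_{n,\gamma}x$ to $\dot x=(A-\alpha(t)bK_\gamma^T)x$. Using \r{elementary} and $K_A^Tb=0$ one checks that, in suitable coordinates, $\dot z=\gamma\bigl(J_n-\alpha(t)\bar K e_1^T\bigr)z+\tfrac1\gamma R(t)z$ with $\|R(t)\|$ bounded by a constant depending only on $A$, $\bar K$ and $S$; note that the excess of $A$ over $J_n$ disappears at rate $1/\gamma$, so the spectrum of $A$ plays no role.

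The heart of the argument is an estimate of $V(z(\cdot))$ over one window $[t,t+T]$. Split $[t,t+T]$ into the \emph{good set} $\{s:\alpha(s)\ge\delta\}$ and the \emph{bad set} $\{s:\alpha(s)<\delta\}$; the persistent excitation condition \r{EP} forces the bad set to have measure at most $(1-\rho)T/(1-\delta)$, and hence the good set to have measure at least $(\rho-\delta)T/(1-\delta)>0$. On the good set the Dayawansa inequality gives $\dot V\le-\bigl(\gamma c_0-O(1/\gamma)\bigr)V$, so $V$ contracts there at least like $\exp(-\gamma c_0(\rho-\delta)T/(1-\delta))$. On the bad set one must control the overshoot of $\dot z=\gamma(J_n-\alpha(t)\bar K e_1^T)z+O(1/\gamma)z$ when $\alpha$ is small, and this is where the Lipschitz bound $|\dot\alpha|\le M$ is decisive: on the fast time scale $1/\gamma$ the coefficient $\alpha$ varies by at most $M/\gamma\to0$, so the dynamics is, up to $O((1+M)/\gamma)$ errors, frozen at the current value $\alpha_0<\delta$, for which the matrix $J_n-\alpha_0\bar K e_1^T$ has eigenvalues of size $O(\alpha_0^{1/n})\to0$ as $\alpha_0\to0$. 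A quasi-static analysis along the bad set --- treating $\alpha$ as a slowly varying parameter and estimating the corresponding product of nearly commuting exponentials --- should bound the bad-set growth by $\exp(\gamma\,\omega(\delta)\,T)$ with $\omega(\delta)\to0$ as $\delta\to0$, instead of by the $\exp(\mathrm{const}\cdot\gamma)$ overshoot available to the sharply switching signals of Proposition~\ref{rho_*}. If $\delta$ is then chosen so that $\omega(\delta)(1-\rho)<c_0(\delta)(\rho-\delta)$ and $\gamma$ is afterwards taken large, each window contracts $V$ by a factor tending to $0$ as $\gamma\to\infty$; undoing the (for fixed $\gamma$, bounded) coordinate change $D_{n,\gamma}$ shows that $K_\gamma$ stabilizes $\dot x=Ax+\alpha bu$, $\alpha\in\caD(T,\rho,M)$, with a rate that grows at least linearly in $\gamma$, hence arbitrarily large.

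The step I expect to be the genuine obstacle is precisely the bad-set estimate. Without the Lipschitz bound the whole scheme fails by Proposition~\ref{rho_*}; with it, the difficulty is quantitative: the Dayawansa decay rate $c_0(\delta)$ and the unavoidable growth rate near $\alpha=0$ both vanish at comparable speeds as $\delta\to0$ (both behave like $\delta^{1/n}$, by the companion structure), so closing the inequality $\omega(\delta)(1-\rho)<c_0(\delta)(\rho-\delta)$ for \emph{every} $\rho\in(0,1]$ cannot rely on the crude ``good measure versus bad measure'' balance alone. One presumably needs either a $\gamma$-adapted or non-quadratic Lyapunov function that exploits the near-nilpotency of the dynamics during the bad phase, or a sharper accounting of how the forced Lipschitz ramps limit both the number of excursions of $\alpha$ below $\delta$ within a window and the time spent at genuinely destabilizing values. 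For $n=2$ one may instead try to combine the explicit phase-plane analysis of Proposition~\ref{rho_*} with the Lipschitz constraint, following how the trajectory on the unit circle of \r{unit0} is perturbed when $\xi$ is required to ramp rather than to switch.
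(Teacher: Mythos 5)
This statement is not a theorem of the paper: it appears in Section~\ref{OP} as an open problem, and the paper offers no proof of it, so there is nothing to compare your argument against. What matters is therefore whether your proposal actually settles the conjecture, and it does not: you yourself flag the decisive step --- the bound $\exp(\gamma\,\omega(\delta)\,T)$ on the growth over the bad set $\{s:\al(s)<\delta\}$ with $\omega(\delta)\to0$ --- as unproven, and as written the surrounding Lyapunov accounting cannot deliver it. With the quadratic function $V(z)=z^TSz$ from \cite[Lemma 4.0]{GK}, the growth rate of $V$ on the bad set is governed by the largest eigenvalue of $S(J_n-\al\bar Ke_1^T)+(J_n-\al\bar Ke_1^T)^TS$ relative to $S$, and even at $\al=0$ the matrix $SJ_n+J_n^TS$ is indefinite of order one; so the bad-set growth of $V$ is $\exp(C\gamma|B|)$ with $C$ bounded away from zero as $\delta\to0$, not $\exp(\gamma\,\omega(\delta)|B|)$. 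The measure balance $|G|\geq(\rho-\delta)T/(1-\delta)$, $|B|\leq(1-\rho)T/(1-\delta)$ then only closes when $c_0(\delta)(\rho-\delta)>C(1-\rho)$, i.e.\ for $\rho$ not too small --- which is essentially Proposition~\ref{rho^*} again, and misses precisely the regime $\rho<\rho_*$ that the conjecture is about. The Lipschitz hypothesis enters your sketch only through the qualitative remark that $\al$ is quasi-static on the time scale $1/\gamma$, but quasi-staticity by itself does not tame the overshoot: the destabilizing mechanism identified after Proposition~\ref{rho_*} is the transient growth of $\exp(s(A-bK_\gamma^T))$ over short times $s$, and a frozen small value $\al_0\in(0,\delta)$ produces the same transient as a switched one. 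What is missing is a genuinely new estimate showing that the ramps forced by $|\dot\al|\leq M$ (whose duration in the rescaled time is of order $\gamma\delta/M$, hence \emph{long}, not short, for large $\gamma$) prevent the matrix products from realizing that overshoot.

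A secondary caveat: the claim that the destabilization half ``follows by the same analysis with the Lyapunov equation of reversed sign'' is not automatic here. In Proposition~\ref{rho^*} the two halves are symmetric because the bad set is controlled by the same crude $\exp(C\gamma|B|)$ bound in both directions; once the argument hinges on a fine analysis of the flow during the bad phase (where the dynamics is close to $\dot x=Ax$, which may contract in the directions relevant for the anti-stable gain $K_2$), the two cases must be checked separately. None of this makes the strategy unreasonable --- decomposing each window into good and bad sets and exploiting near-nilpotency during the bad phase is a sensible plan, and your closing paragraph correctly locates the obstruction --- but the proposal is a research programme, not a proof, and the conjecture remains open.
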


\bibliographystyle{abbrv}
\bibliography{biblio_UC}

\end{document}